\newtheorem{theorem}{Theorem}[section]
\newtheorem{lemma}[theorem]{Lemma}
\theoremstyle{definition}
\newtheorem{definition}[theorem]{Definition}
\newtheorem*{remark}{Remark}
\definecolor{comcolor}{rgb}{0.9,0.3,0.3}
\definecolor{starcolor}{rgb}{0.3,0.3,0.9}
\newcommand{\Z}{\mathbb{Z}}
\newcommand{\R}{\mathbb{R}}
\newcommand{\N}{\mathbb{N}}
\newcommand{\B}{\mathcal{B}}
\newcommand{\vi}{\mathbf{i}}
\newcommand{\vx}{\mathbf{x}}
\newcommand{\vt}{\mathbf{t}}
\renewcommand{\P}{\mathbb{P}}
\newcommand{\Pl}{\operatorname{\mathbf{P}}}
\newcommand{\E}{\operatorname{E}}
\newcommand{\Var}{\operatorname{Var}}
\newcommand{\F}{\mathcal{F}}
\newcommand{\expect}[1]{\E \left[ #1 \right]}
\newcommand{\indicate}[1]{\mathbf{1} \! \left \{ #1 \right \}}
\renewcommand{\wp}[1]{: \! #1 \! :}
\newcommand{\mZ}{\mathcal{Z}}
\renewcommand{\equiv}{\stackrel{(d)}{=}}
\begin{document}

\title{The Continuum Directed Random Polymer}

\author{Tom Alberts$^{\dag}$, Konstantin Khanin$^{\ddag}$, and Jeremy Quastel$^{\ddag}$ \footnote{Research of all three authors supported by the Natural Sciences and Engineering Research Council of Canada.} \\ \\ \small{$^{\dag}$California Institute of Technology \,\, \& \,\, $^{\ddag}$University of Toronto}}

\date{}

\maketitle

\begin{abstract}
Motivated by discrete directed polymers in one space and one time dimension, we construct a continuum directed random polymer that is modeled by a continuous path interacting with a space-time white noise. The strength of the interaction is determined by an inverse temperature parameter $\beta$, and for a given $\beta$ and realization of the noise the path evolves in a Markovian way. The transition probabilities are determined by solutions to the one-dimensional stochastic heat equation. We show that for all $\beta > 0$ and for almost all realizations of the white noise the path measure has the same H\"{o}lder continuity and quadratic variation properties as Brownian motion, but that it is actually singular with respect to the standard Wiener measure on $C([0,1])$.
\end{abstract}

\maketitle

\section{Introduction}

Polymers are sequences of molecules bonded together to form extremely long chains of chemical compounds, and their ubiquity, diversity, and crucial importance in chemistry, biology, physics, etc., has led to numerous attempts at building models of their formation and subsequent behavior. The tools of statistical physics and probability are well-suited for constructing and analyzing polymer models, and the literature of both fields abounds with many examples. Particularly nice surveys of the scope and complexity of polymer models in the probabilistic context are found in \cite{giacomin:book} and \cite{ComSY:review}. In this paper we concentrate on the (large) subclass of \textit{directed} models featuring interaction with external environments, as originally introduced in \cite{HuseHenley, ImSpen:diffusion}. The specifics of models of this type are varied, but from an abstract, purely mathematical point of view they are of a very similar flavor. Generally speaking, models for a discrete directed polymer interacting with an environment can be thought of as probability measures on the product space of environments and nearest neighbour paths, which is the usual approach taken for models of random walk in random environment. The measure is determined by specifying:
\begin{itemize}
\item the marginal measure of the environment, and,
\item the conditional measure on the path space, given the environment, as a Gibbs measure on the set of simple random walk paths of length $n$.
\end{itemize}
The Hamiltonian for the Gibbs measure is a function of the environment and (usually) a temperature parameter. Letting $\omega$ denote an environment and $S$ denote a generic path, the polymer measure at inverse temperature $\beta$ is written as
\begin{align*}
\P_{n, \beta} \left( d \omega d S \right) = \frac{1}{Z_n^{\omega}(\beta)} e^{\beta H_n^{\omega}(S)} \, \Pl(dS) Q(d \omega) = \Pl_{n, \beta}^{\omega}(dS) Q(d \omega),
\end{align*}
where $Q$ is the marginal measure of the environment, $\Pl$ is a reference measure on the set of paths (usually uniform measure), and $Z_n^{\omega}(\beta)$ is a partition function that normalizes the quenched measure $\Pl_{n, \beta}^{\omega}$ to have total mass one. In this framework, the emphasis is on understanding the behavior of the polymer by studying properties of the probability measure $\P_{n, \beta}$. Most commonly one is interested in understanding the measure as the length of the polymer grows large. In most previous work this has been done by studying various statistics of the polymer measure, such as the asymptotic free energy or the typical displacement of the polymer path from its starting point, but little has been done in understanding the asymptotic structure of the measure itself.

The purpose of the present paper is to understand the large $n$ limit of a directed polymer model in dimension $1$+$1$ by constructing the scaling limit of the corresponding probability measure on environments and paths. The measure we construct is truly a continuum object, meaning that it is supported on continuum environments and continuous paths. For this reason we call the limiting object the \textbf{continuum directed random polymer}. Its most novel feature is that, like in the discrete case, the continuum path measure actually depends on the continuum environment, and in a very non-trivial way. In fact we shall see that for almost all environments the continuum random polymer is singular with respect to Brownian motion, and hence in this regard it can be seen as a genuinely new measure on continuous paths. This is to be contrasted with the situation in \cite{ImSpen:diffusion, bolthausen:note, kifer:polymers, ComY:diffusive_weak}, where for $d \geq 3$ and $\beta$ sufficiently small the limiting measure is simply Brownian motion, independent of the environment, or even \cite{conlon_olsen}, where the limiting measure is Brownian motion with a diffusion constant that depends on the environment.

We emphasize that in the current paper we will not actually be taking any scaling limits of discrete polymers, rather we will construct the probability measure on continuum environments and paths directly from other continuum objects. In the related work \cite{AKQ:IDR} we show that the random field of transition probabilities for our path measure does arise as a scaling limit from $1+1$ dimensional directed polymers in the \textbf{intermediate disorder regime} \cite{AKQ:prl}. Under this regime the intensity of the environment is scaled to zero with the length of the polymer, and the continuum random polymer arises from a diffusive scaling of the discrete polymer paths. For the present paper, however, we do not use any results from the discrete model and we only refer to it for motivational purposes.

We also emphasize that there are many references to a continuum directed random polymer in the literature, see for example \cite{BC, BG, ACQ}, but to the best of our knowledge the specific one we construct has not been considered. In several papers \cite{BC, BTV08} polymers were studied by smoothing out the environment field in the space variable so that the random path measure can be constructed using a Radon-Nikodym derivative with respect to Wiener measure. Part of the purpose of this paper is to show that this smoothing of the white noise is not necessary as long as one does not try to directly construct a Gibbs measure (see \eqref{formal_gibbs_expression}). Our construction is based on a different technique: we specify the random transition probabilities of the polymer and show that they are almost surely consistent. This uniquely specifies the path measure, and as we will show it is almost surely singular with respect to Wiener measure. This explains why a Gibbs measure cannot be constructed directly. However, our construction is simple enough that we are able to derive several path properties of the continuum polymer, which we do in Section 4.

The paper is organized as follows: in the next section we give a brief, heuristic introduction to the continuum directed random polymer and its construction. We also include a short background section on the discrete polymer model against which the reader may compare the continuum polymer. Section 3 reviews some standard results on white noise, stochastic integration, and the stochastic heat equation, which are used in Section 4 to finally construct the continuum polymer. Precise statements of our theorems can also be found in Section 4. In Section 5 we discuss some generalizations of the continuum random polymer which can be easily constructed using the methods of Section 4. We finish by deriving a formal expression for the stochastic differential equation that describes the infinitesimal dynamics of the continuum random polymer, and demonstrates the connection with the Kardar-Parisi-Zhang equation.

\section{Heuristics and Motivation}

\subsection{A Heuristic Description of the Continuum Random Polymer}

The continuum random polymer is constructed using the framework of the introduction, but with continuum objects replacing discrete ones whenever necessary. Our polymer is $1$+$1$-dimensional, hence the relevant continuum objects live in the space $[0,1] \times \R$, where the first coordinate is time and the second is space. The continuum environment is a very basic random field: a white noise $W$ on $[0,1] \times \R$ with correlations $$\expect{W(s , y) W(t , x)} = \delta(t-s) \delta(x-y).$$ Formally, the Hamiltonian for the continuum random polymer is the integral of the white noise field over a Brownian path, leading to the Gibbsian formalism
\begin{align}\label{formal_gibbs_expression}
\P_{\beta} \left( dW \, dB \right) = \frac{1}{\mZ_{\beta}^W} \exp \left \{ \beta \int_0^1 W(s, B_s) \, ds \right \} \Pl(dB) \, Q(dW).
\end{align}
Here $\Pl$ represents the usual Wiener measure on $C([0,1])$. We emphasize that this is a purely formal expression that utilizes some attractive and suggestive notation but is not rigorous. The integral of white noise over a Brownian path is a distribution and not a function, and therefore the exponential cannot be given a direct meaning.

As the last paragraph hints, in contrast to the discrete case we are not able to express the path measure for the continuum random polymer in Gibbsian form. As mentioned in the introduction, we will see that the path measure is singular with respect to all other path measures that we know of (most importantly to Wiener measure), which leaves us with no reference measure against which we can define a Gibbs form. However, there is more than one way of constructing a path measure, and instead of using the Gibbsian formalism we exploit the fact that path measures are uniquely determined by their finite dimensional distributions. To construct the random path measure it is sufficient to construct a family of finite dimensional distributions (\textit{fdd}s) that are almost surely consistent and which allow the measure to be almost surely supported on continuous paths (the almost sure statements here are in the environment). This is the strategy employed in Section 4. To understand where the \textit{fdd}s come from, we remark that the continuum random polymer, conditioned on the environment, turns out to be a Markov process. This is not surprising as the same is true for the discrete polymer, which we recall in the next section. The Markov property, however, implies that to determine the \textit{fdd}s it is enough to specify the transition probabilities
\begin{align*}
\Pl_{\beta}^W \left( X_t \in dx | X_s \in dy \right)
\end{align*}
for $0 \leq s < t \leq 1$. If we let $\mZ(s,y;t,x;\beta)$ represent the density of the transition probabilities with respect to Lebesgue measure, then \eqref{formal_gibbs_expression} and the Feynman-Kac representation suggest that this function should satisfy the stochastic heat equation
\begin{align*}
\partial_t \mZ = \tfrac{1}{2} \partial_x^2 \mZ+ \beta W \! \mZ, \quad \mZ(s,y;s,x;\beta) = \delta_0(x-y).
\end{align*}
In Section 3 we briefly review properties of solutions to the $1$-dimensional stochastic heat equation and show that it can indeed be used to construct a consistent family of \textit{fdd}s that support the path measure on $C([0,1])$. Before proceeding with that, we quickly recall the discrete model of directed polymers in dimension $1$+$1$ as another way of motivating the strategy outlined above.

\subsection{Directed Polymers in Dimension $1+1$}

For directed polymers in dimension $1+1$ the environment space is $\Omega = \left \{ \omega : \N \times \Z \to \R \right \}$, equipped with a product measure $Q$ under which the variables $\omega(i,x)$ are independent and identically distributed (i.i.d.), and the Hamiltonian is defined on nearest-neighbor random walk paths by
\begin{align*}
H_n^{\omega}(S) = \sum_{i=1}^n \omega(i, S_i).
\end{align*}
We assume that the environment variables are mean zero and variance one and have finite exponential moments. The corresponding Gibbs measure on the $2^n$ paths of length $n$ is given by
\begin{align*}
\Pl_{n, \beta}^{\omega}(S) = \frac{1}{Z_n^{\omega}(\beta)} e^{\beta H_n^{\omega}(S)} \Pl(S),
\end{align*}
where $\Pl(S)$ is the uniform probability measure on paths and $Z_n^{\omega}(\beta)$ is the partition function
\begin{align*}
Z_n^{\omega}(\beta) = \E_{\Pl} \left[ e^{\beta H_n^{\omega}(S)} \right].
\end{align*}
There are two important properties of the path measure that we wish to emphasize:
\begin{itemize}
\item for a fixed $n$ and conditional on the environment the path measure is Markovian, and,
\item the path measure is fully determined by its finite dimensional distributions, and these can be expressed in terms of the \textit{space-time point-to-point} partition functions.
\end{itemize}
Both properties are inherited from the Markovian nature of the underlying path measure $\Pl$. For any set $A$ of random walk paths, the definition of $\Pl_{n, \beta}^{\omega}$ implies that
\begin{align*}
\Pl_{n,\beta}^{\omega}(A) = \frac{1}{Z_n^{\omega}(\beta)} \E_{\Pl} \left[ e^{\beta H_n^{\omega}(S)} \indicate{S \in A} \right].
\end{align*}
Taking $A$ to be the event $\{ S_{\vi_1} = \vx_1, \ldots, S_{\vi_k} = \vx_k, S_n = x \}$ for $1 \leq \vi_1 < \ldots < \vi_k < n$, we get
\begin{align}
\Pl_{n,\beta}^{\omega}(A) &= \frac{1}{Z_n^{\omega}(\beta)} \E_{\Pl} \left[ \prod_{m=0}^{k} \exp \left \{ \beta \!\!\!\! \sum_{l=\vi_{m}+1}^{\vi_{m+1}} \!\! \omega(l, S_l) \right \} \indicate{S_{\vi_{m+1}} = \vx_{m+1}} \right] \notag \\
&= \frac{1}{Z_n^{\omega}(\beta)} \prod_{m=0}^{k} \E_{\Pl} \left[ \left. \exp \left \{ \beta \!\!\!\! \sum_{l=\vi_{m}+1}^{\vi_{m+1}} \!\! \omega(l, S_l) \right \} \indicate{S_{\vi_{m+1}} = \vx_{m+1}} \right| S_{\vi_m} = \vx_m \right] \label{discrete_fdds_long}
\end{align}
Here $\vi_0 = \vx_0 = 0$ and $\vi_{k+1} = n, \vx_{k+1} = x$. The interchange of the product with the expectation is the expression of the fact that the random walk path passing through the sequence of space-time points $(\vi_j, \vx_j)$ is the concatenation of several independent walks (of appropriate length) starting from each of the points. We define the space-time \textit{point-to-point} partition functions by
\begin{align*}
Z_n^{\omega}(i,x;j,y; \beta) = \E_{\Pl} \left[ \left. \exp \left \{ \beta \! \sum_{k=i+1}^j \omega(k, S_k)  \right \} \indicate{S_j = y} \right| S_{i} = x \right],
\end{align*}
and the corresponding \textit{point-to-line} versions by
\begin{align*}
Z_n^{\omega}(i,x;j,*; \beta) = \sum_{y \in \Z} Z_n^{\omega}(i,x;j,y;\beta).
\end{align*}
Then the finite dimensional distributions \eqref{discrete_fdds_long} can be rewritten as
\begin{align*}
\Pl_{n, \beta}^{\omega} \left( S_{\vi_1} = \vx_1, \ldots, S_{\vi_k} = \vx_k, S_n = x \right) = \frac{1}{Z_n^{\omega}(0,0;n,*;\beta)} \prod_{m=0}^k Z_n^{\omega}(\vi_m, \vx_m; \vi_{m+1}, \vx_{m+1}; \beta).
\end{align*}
Summing over $x$ leads to
\begin{align}\label{discrete_fdds_free_endpoint}
\Pl_{n, \beta}^{\omega} \left( S_{\vi_1} = \vx_1, \ldots, S_{\vi_k} = \vx_k \right) = \frac{Z_n^{\omega}(\vi_k, \vx_k; n, *;\beta)}{Z_n^{\omega}(0,0;n,*;\beta)} \prod_{m=0}^{k-1} Z_n^{\omega}(\vi_m, \vx_m; \vi_{m+1}, \vx_{m+1}; \beta).
\end{align}
In particular this implies that for any integer $0 \leq i < n$ we have
\begin{align}
\Pl_{n, \beta}^{\omega} \left( \left. S_{i+1} = S_i \pm 1 \right| S_1, \ldots, S_i \right) &= Z_{n}^{\omega}(i, S_i; i+1, S_i \pm 1; \beta) \frac{Z_n^{\omega}(i+1, S_i \pm 1; n, *; \beta)}{Z_n^{\omega}(i, S_i; n, *; \beta)} \notag \\
&= \tfrac12 e^{\beta \omega(i+1, S_i \pm 1)} \frac{Z_n^{\omega}(i+1, S_i \pm 1; n, *; \beta)}{Z_n^{\omega}(i, S_i; n, *; \beta)} \label{discrete_one_step_trans_prob}
\end{align}
Given $\omega$ the right hand side is completely determined by $S_i$, hence under $\Pl_{n, \beta}^{\omega}$ the walk $S_i, 1 \leq i \leq n$, is a Markov process with \eqref{discrete_one_step_trans_prob} as its transition probabilities. As is to be expected they are \textit{not} homogeneous in space and time but instead vary according to the random environment. Note also that they depend on the \textit{entire} environment ahead of the current polymer position $(i,S_i)$, and not just the environment locally surrounding the space-time point. Equation \eqref{discrete_fdds_free_endpoint} gives the family of finite dimensional distributions for the polymer, and as the polymer can only be one of finitely many paths these f.d.d.s uniquely specify the path measure.

Observe that equation \eqref{discrete_one_step_trans_prob} is also a manifestation of the simple identity
\begin{align}\label{discrete_backward_difference_equation}
Z_n^{\omega}(i,x;n,*;\beta) = \tfrac12 e^{\beta \omega(i+1, x+1)} Z_n^{\omega}(i+1, x+1; n, *; \beta) + \tfrac12 e^{\beta \omega(i+1, x-1)} Z_n^{\omega}(i+1, x-1; n, *; \beta),
\end{align}
which is obtained by conditioning on the direction of the walk between time $i$ and $i+1$. This emphasizes that the point-to-point partition function can be fully determined given $\omega$; one simply has to solve the random difference equation \eqref{discrete_backward_difference_equation}. In probabilistic language it is a random backwards Kolmogorov equation with the final time data given by
\begin{align*}
Z_n^{\omega}(n,x;n,*;\beta) = e^{\beta \omega(n,x)}.
\end{align*}
There is also a corresponding forward evolution equation for the point-to-point field. By conditioning on the step between time $j$ and $j+1$ we get
\begin{align}\label{discrete_forward_difference_equation}
Z_n^{\omega}(i,x;j+1,y;\beta) = \tfrac{1}{2} e^{\beta \omega(j+1, y)} \left[ Z_n^{\omega}(i,x;j,y+1;\beta) + Z_n^{\omega}(i,x; j, y-1;\beta) \right]
\end{align}
with the initial data $Z_n^{\omega}(i,x;i,y) = \indicate{x=y}$. Both \eqref{discrete_backward_difference_equation} and \eqref{discrete_forward_difference_equation} can be considered as discrete time heat equations, although one is forward in time and the other is backwards in time.

We end this section with a brief remark that we consider important, even though it will not be used in the rest of the paper. In contrast to simple random walk it is important to recognize that the \textit{family} of measures $\Pl_{n, \beta}^{\omega}, n \geq 0$, is \textit{not} consistent, i.e. the measure on paths of length $n+1$ does not project onto the measure on paths of length $n$ when one simply forgets the time $n+1$ position of the walk. Intuitively this is clear. The polymer measure weights each path according to the energy assigned to it by the Hamiltonian, and the energy of a path after the first $n$ steps may vary wildly from the energy after $n+1$ steps, depending on the values of $\omega$ at time $n+1$. Hence the probability of the length $n$ path may be very different under the two measures. This can also be seen from equation \eqref{discrete_one_step_trans_prob} by noticing that at a fixed time $i$ the transition probabilities are not constant in $n$. This lack of consistency is also a feature of the continuum random polymer.

\section{The Continuum Stochastic Heat Equation}

\subsection{White Noise on $[0,1] \times \R$}

\newcommand{\Qf}{\mathcal{W}}

In this section we briefly recall the elementary theory of white noise and stochastic integration on the particular measure space $L^2([0,1] \times \R, \B, dt \, dx)$. Here $\B$ is the $\sigma$-algebra of Borel subsets and $dt \, dx$ denotes Lebesgue measure on the space. We let $\B_f$ be the subset of $\B$ consisting of sets of finite Lebesgue measure. Observe that $\B = \sigma(\B_f)$ because Lebesgue measure is $\sigma$-finite on the given space.

A white noise on $[0,1] \times \R$ is a collection of mean zero Gaussian random variables defined on a common probability space $(E, \Qf, Q)$ and indexed by $\B_f$:
\begin{align*}
W = \left \{ W(A) : A \in \B_f \right \}.
\end{align*}
This means that every finite collection of the form $(W(A_1), \ldots, W(A_k))$ has a $k$-dimensional Gaussian distribution, with mean zero and covariance structure given by
\begin{align*}
\expect{W(A)W(B)} = \left| A \cap B \right|.
\end{align*}
In particular if $A$ and $B$ are disjoint then $W(A)$ and $W(B)$ are independent.

Stochastic integration uses the white noise to map elements of $L^2([0,1] \times \R)$ into Gaussian random variables in a way that preserves the Hilbert space structure of the $L^2$ space. For $g \in L^2([0,1] \times \R)$ we use the notation
\begin{align*}
I_1(g) = \int_0^1 \int g(t,x) \, W(t , x) \, dt \, dx,
\end{align*}
with the integral being defined as an appropriate $L^2(E, \Qf, Q)$ limit, see \cite{Janson:GHS} for a more detailed definition. Multiple stochastic integrals $I_k$ map elements of $g \in L^2([0,1]^k \times \R^k)$ into $L^2(E, \Qf, Q)$ random variables, for which we use the notation
\begin{align*}
I_k(g) = \int_{[0,1]^k} \int_{\R^k} g(\vt, \vx) \, W^{\otimes k}(\vt, \vx) \, d \vt \, d \vx.
\end{align*}
These integrals are often used to express solutions to stochastic partial differential equations. The most important fact we will use about them is the covariance relation
\begin{align}\label{stoch_integral_covariance}
\E_{Q} \left[ I_k(g) I_j(h) \right] = \indicate{j = k} \left< g, h \right>_{L^2([0,1]^k \times \R^k)}.
\end{align}
This implies a scaling relation for $W$. Since $W(t,x)$ can be thought of as mapping elements of $L^2([0,1] \times \R)$ into Gaussians with variance given by the square of the $L^2$ norm, formally applying a change of variables shows that for $\alpha, \beta > 0$ the noise $W(\alpha t, \beta x)$ maps $g \in L^2([0,1] \times \R)$ into a Gaussian with variance $\alpha ^{-1} \beta^{-1} ||g||_{L^2([0,1] \times \R)}^2$. Hence this gives us the scaling relation
\begin{align*}
W(\alpha t, \beta x) = \alpha^{-1/2} \beta^{-1/2} W(t,x).
\end{align*}
More generally, for $k \geq 1$ we have
\begin{align}\label{white_noise_scaling}
W^{\otimes k}(\alpha \vt, \beta \vx) = \alpha^{-k/2} \beta^{-k/2} W^{\otimes k}(\vt, \vx).
\end{align}

\subsection{Solutions and Properties of the Stochastic Heat Equation}

The stochastic heat equation with multiplicative noise plays a central role in our construction of the continuum directed random polymer. We use it to construct the four-parameter field which defines the continuum polymer's finite dimensional distributions. The basic equation is
\begin{align}\label{SHE}
\partial_t \mZ = \tfrac{1}{2} \partial^2_{x} \mZ + \beta W \! \mZ.
\end{align}
We will only consider solutions with delta function initial conditions,
\begin{align}\label{SHEinit}
\lim_{t\downarrow 0} \mZ(t,x)= \delta_0(x),
\end{align}
where \eqref{SHEinit} is interpreted in the weak sense, meaning that the limit holds almost surely when $\mZ$ is integrated against smooth functions of compact support. The presence of the white noise means that any reasonable solution of \eqref{SHE} will not be differentiable in space or time. Instead of requiring differentiability we say that $\mZ = \mZ(t,x)$ is a {\it mild solution} to \eqref{SHE} if it satisfies the integral equation
\begin{align}\label{SHE_integral_equation}
\mZ(t,x) = \int \varrho(t-s,x-y)\mZ(s,y) \, dy + \beta \int_s^t \int \varrho(t-u,x-y) \mZ(u,y) W(u , y) \, du \, dy,
\end{align}
for all $0 \leq s < t$, and if for every $(t,x) \in [0,1] \times \R$ the random variable $\mZ(t,x)$ is $\Qf_t := \sigma(W(s, \cdot) : 0 \leq s \leq t)$ measurable. This measurability condition makes the stochastic integral in the right hand side of \eqref{SHE_integral_equation} well defined in the It\^o sense. Here $\varrho$ the usual heat kernel
\begin{align*}
\varrho(t,x) = \frac{e^{-x^2/2t}}{\sqrt{2 \pi t}}.
\end{align*}

We also have an explicit Wiener chaos expansion of $\mZ(t,x)$.  Since we only use it in the case
of delta initial conditions, we assume here that $\mZ(0,x) =\delta_0(x)$. Iterating \eqref{SHE_integral_equation} multiple times gives
\begin{align}
\mZ(t,x) = \varrho(t,x) &  + \beta \int_0^t \int \varrho(t - s, x - y) \varrho(s,y) W(s , y) \, ds \, dy\label{SHE_chaos_expansion} \\
& \!\!\!\!\!\!\!\!\!\!\!\!\! + \beta^2 \int_0^t \int_0^s \int \int \varrho(t-s,x-y) \varrho(s-r, y-z) \varrho(r,z) W(s , y) W(r , z) \, ds \, dr \, dy \, dz + \ldots \notag
\end{align}
The stochastic integrals $I_k$ of the last section allow us to write this in more succinct notation, and to slightly generalize. For each $(s,y), (t,x) \in [0,1] \times \R$ with $0 \leq s < t \leq 1$, define the Brownian motion transition probabilities via
\begin{align*}
\varrho(s,y;t,x) = \varrho(t-s, x-y),
\end{align*}
and for $(\vt, \vx) \in [0,1]^k \times \R^k$ define the multi-step transition probabilities by
\begin{align*}
\varrho_k(\vt, \vx | s,y;t,x) = \varrho(s,y; \vt_1, \vx_1) \left( \prod_{j=1}^{k-1} \varrho(\vt_j, \vx_j; \vt_{j+1}, \vx_{j+1}) \right) \varrho(\vt_k, \vx_k, t,x) \indicate{\vt \in \Delta_k(s,t]}.
\end{align*}
Here $\Delta_k(s, t] = \{ \vt \in \R_+^k : s < \vt_1 < \vt_2 < \ldots < \vt_k \leq t \}.$  For each $\beta \geq 0$, we \textbf{define} the four-parameter random field $\mZ(s,y;t,x;\beta)$ by
\begin{align}\label{four_param_field_def}
\mZ(s,y;t,x;\beta) = \sum_{k=0}^{\infty} \beta^k I_k(\varrho_k(\cdot, \cdot | s,y;t,x)).
\end{align}
Here $I_0(\varrho_0(\cdot, \cdot | s,y;t,x)) = \varrho(s,y;t,x) = \varrho(t-s,x-y)$, and in this new notation equation \eqref{SHE_chaos_expansion} is simply $\mZ(0,0;t,x;\beta)$. A comparison with \eqref{SHE} and \eqref{SHE_chaos_expansion} shows that for each $(s,y)$ the field $(t,x) \mapsto \mZ(s,y;t,x;\beta)$ is the solution to
\begin{align}\label{generalized_SHE}
\partial_t \mZ = \tfrac{1}{2} \partial_{x}^2 \mZ + \beta W Z, \quad \lim_{t \downarrow s} \mZ(s,y;t,x) = \delta_0(x-y).
\end{align}
It is easy to see that $\mZ(s,y;t,x;\beta)$ satisfies the integral equation
\begin{align}\label{generalized_integral_equation}
\mZ(s,y;t,x;\beta) &= \int \varrho(t-r,x-z) \mZ(s,y;r,z;\beta) \, dz \notag \\
&\quad \, + \beta \int_r^t \int \varrho(t-u, x-z) \mZ(s,y;u,z;\beta) W(u,z) \, du \, dz
\end{align}
and using the covariance relation \eqref{stoch_integral_covariance} it follows that
\begin{align}\label{Z_covariance_relation}
\E_{Q} \left[ \mZ(s,y;t,x;\beta)^2 \right] = \sum_{k=0}^{\infty} \beta^{2k} || \varrho_k(\cdot, \cdot | s,y;t,x) ||_{L^2([0,1]^k \times \R^k)}^2.
\end{align}
Using integrals that appear in the Dirichlet distribution, it is straightforward to compute that
\begin{align}\label{rhok_norm}
||\varrho_k(\cdot, \cdot | s,y;t,x)||_{L^2([0,1]^k \times \R^k)}^2 = \frac{(t-s)^{\frac{k}{2} - 1}}{2^{k+1} \sqrt{\pi} } \frac{e^{-(x-y)^2/(t-s)}}{\Gamma(\frac{k+1}{2})},
\end{align}
which is clearly summable in $k$.  Hence one has that (\ref{four_param_field_def}) is a mild
solution of (\ref{SHE}). From \eqref{Z_covariance_relation} and \eqref{rhok_norm} it follows that for all $\beta \geq 0$ there exists a constant $C_\beta<\infty$ such that
\begin{equation}\label{elltwobd}
E_{Q}\left[ \mZ(s,y;t,x;\beta)^2 \right] \le C_\beta \varrho(s,y;t,x)^2
\end{equation}
for all $0 < s \leq t \leq 1$. In \cite{BC} it is shown that there exists at most one mild solution to \eqref{SHE} with initial condition \eqref{SHEinit} satisfying
\begin{equation}\label{elltwobd2}
\sup_{\substack{0\le t\le 1 \\ x\in\mathbb{R}}} \int_{0 < s' < s \le t} ds \, ds' \int_{\mathbb{R}^2} dy \, dy'
E_Q[ \mZ(s',y')^2]\varrho(s',y',s,y)^2 \varrho(s,y,t,x) <\infty.
\end{equation}
Precisely, this means that any two fields $\mZ(t,x), \mZ^*(t,x)$ satisfying \eqref{SHE_integral_equation} (for the same white noise) are equal almost surely, in the sense that
\begin{align*}
Q \left( \mZ(t,x) = \mZ^*(t,x) \textrm{ for all } 0 \leq  t \leq 1, x \in \R \right) = 1.
\end{align*}
Equation \eqref{elltwobd2} follows from \eqref{elltwobd} for delta function initial data, hence we have that \eqref{SHE_chaos_expansion} is the unique mild solution to \eqref{SHE} with initial condition \eqref{SHEinit}. Since the equation \eqref{generalized_SHE} differs from \eqref{SHE} only by the location of the initial condition, the method of \cite{BC} extends to show that \eqref{four_param_field_def} is the unique mild solution to \eqref{generalized_SHE}. Hence any two fields satisfying \eqref{generalized_integral_equation} are equal almost surely.

\begin{remark}
In the context of stochastic partial differential equations, mild solutions are the analogue of {\it strong} solutions of stochastic differential equations in the sense that they are solutions of a stochastic equation involving a given white noise. Another concept of solution is the {\it solution of the martingale problem}, which is to find a probability measure $\mu$ on $C(\mathbb{R}_+,C(\mathbb{R}))$ (the distribution of the $\mZ(t,x)$) under which both
\begin{align*}
M_t(\varphi):=  \int \mZ(t,x)\varphi(x) \, dx - \int  \mZ(t,x)\varphi(x) \, dx - \int_0^t \int  \mZ(s,x) \varphi''(x) \, dx \, ds
\end{align*}
and
\begin{align*}
\Lambda_t(\varphi) := M_t(\varphi)^2 - \int_0^t \int  \mZ(s,x)^2\varphi(x)^2 \, dx \, ds
\end{align*}
are local martingales with respect to the filtration $\Qf_t$ for every smooth function $\varphi$ of compact support in $\mathbb{R}$. It is shown in \cite{BG} that given a solution of the martingale problem there exists a probability space on which there is a white noise and  $\mZ(t,x)$ distributed according to $\mu$ such that $\mZ$ is a mild solution of (\ref{SHE}).
\end{remark}

\begin{remark}
There is a convenient shorthand expression for our four parameter random field: it may be rewritten as
\begin{align}\label{exponential_shorthand}
\mZ(s,y;t,x;\beta) = \E_{\Pl} \left[ \left. \wp{\exp} \left \{ \beta\! \int_s^t \! W(u, B_u) \, du \right \} \right| B_s \in dy, B_t \in dx \right] \varrho(s,y;t,x)
\end{align}
Formally the expression inside the expectation makes no sense, since we do not know how to interpret the integral of a white noise over a Brownian path. However, if one takes the exponential and expands it into a Taylor series, then switches the expectation over paths with the infinite sum and integrates out the Brownian terms $B$, the well-defined chaos expansion \eqref{four_param_field_def} appears. The Wick exponential $\wp{\exp}$ indicates that powers of integrals should be expanded via the rule
\begin{align*}
: \! \left( \int_0^1 W(s, B_s) \, ds \right)^k \!\!\! : \,\,= k! \! \int_{\Delta_k} \prod_{j=1}^k W(t_j, B_{t_j}) \, dt_j.
\end{align*}
Equation \eqref{exponential_shorthand} should be seen as a convenient shorthand for this procedure.
\end{remark}

The theorem below summarizes the properties of the field $\mZ$ that we will use in later sections.

\begin{theorem}\label{Z_properties_theorem}
There exists a version of the field $\mZ(s,y;t,x;\beta)$ which is jointly continuous in all four variables and has the following properties:
\begin{enumerate}[i.]
\item $\E_{Q} \left[ \mZ(s, y; t, x; \beta) \right] = \varrho(s,y;t,x)$;
\item \textbf{stationarity:} $\mZ(s, y; t, x; \beta) \equiv \mZ(s + u_0, x + z_0; t + u_0, y + z_0; \beta)$;
\item \textbf{scaling:} $\mZ(r^2 s, ry; r^2 t, rx; \beta) \equiv r^{-1} \mZ(s, y; t, x; \beta \sqrt{r})$;
\item \textbf{positivity:} \cite{Mueller} with $Q$ probability one, $\mZ(s,y;t,x;\beta)$ is strictly positive for all tuples $(s,y;t,x)$ with $0 \leq s < t \leq 1$,
\item the law of $\mZ(s,y;t,x;\beta)/\varrho(s,y;t,x)$ does not depend on $x$ or $y$,
\item it has an independence property among disjoint time intervals: for any finite disjoint $\{ (s_i, t_i] \}_{i=1}^n$ and any $x_i, y_i \in \R$, the random variables $\{ \mZ(s_i, y_i; t_i, x_i; \beta) \}_{i=1}^n$ are mutually independent,
\item \textbf{Chapman-Kolmogorov equations}: with $Q$ probability one, for all $0 \leq s < r < t \leq 1$ and $x,y \in \R$,
\begin{align*}
\mZ(s,y;t,x; \beta) = \int \! \mZ(s,y;r,z; \beta) \mZ(r,z;t,x;\beta) \, dz.
\end{align*}
\end{enumerate}
\end{theorem}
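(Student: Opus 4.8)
The plan is to treat the joint continuity as the analytic core and to read off the remaining properties from the chaos expansion \eqref{four_param_field_def} together with the white noise identities of Section 3. I would establish the continuous version first, since the almost-sure statements (iv) and (vii) are asserted to hold simultaneously for \emph{all} tuples and therefore require a modification that is continuous on $\{0\le s<t\le1\}\times\R^2$. The natural tool is Kolmogorov's continuity criterion in the four parameters, fed by moment bounds on increments. Because each summand $\beta^k I_k(\varrho_k(\cdot\,|\,s,y;t,x))$ lives in the $k$-th Wiener chaos, Gaussian hypercontractivity gives $\E_Q[|I_k(g)|^{2p}]^{1/2p}\le(2p-1)^{k/2}\E_Q[I_k(g)^2]^{1/2}$, so it suffices to control the $L^2$ norms of the kernel increments $\varrho_k(\cdot|s,y;t,x)-\varrho_k(\cdot|s',y';t',x')$ and to sum in $k$; the factorial growth of $\Gamma(\tfrac{k+1}{2})$ in \eqref{rhok_norm} dominates the hypercontractive constants, so the sum converges. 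Estimating those increments by differentiating the heat kernel yields a bound by a power of the parameter distance, uniform where $t-s$ is bounded below, and Kolmogorov then produces the continuous version. I expect this to be the main obstacle, the delicate point being uniformity of the estimates as $t-s\downarrow0$, which is what forces continuity to be claimed only on $\{s<t\}$.

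Given the continuous version, properties (i)--(iii) follow from term-by-term manipulation of \eqref{four_param_field_def}. For (i), the integrals $I_k$ with $k\ge1$ have mean zero, leaving only the $k=0$ term $\varrho(s,y;t,x)$. For (ii), $\varrho_k$ depends on its parameters only through time and space differences, so a space-time shift of $(s,y;t,x)$ is absorbed by the substitution $\vt\mapsto\vt-u_0$, $\vx\mapsto\vx-z_0$, and the translation invariance of $W$ yields $\equiv$. For (iii), substituting $\vt=r^2\tau$, $\vx=r\xi$ makes each of the $k+1$ heat-kernel factors contribute $r^{-1}$ while the scaling \eqref{white_noise_scaling} contributes $r^{-3k/2}$ in law; collecting the Jacobian $r^{2k}\cdot r^{k}$ leaves $r^{k/2-1}$, and $\beta^k r^{k/2-1}=r^{-1}(\beta\sqrt r)^k$ reassembles $r^{-1}\mZ(s,y;t,x;\beta\sqrt r)$.

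Property (vi) is immediate from the indicator $\indicate{\vt\in\Delta_k(s,t]}$ in $\varrho_k$: every integration variable in $\mZ(s_i,y_i;t_i,x_i;\beta)$ has time coordinate in $(s_i,t_i]$, so that random variable is measurable with respect to the noise on $(s_i,t_i]\times\R$, and disjoint time intervals carry independent (jointly Gaussian, hence independent) noise. For the Chapman--Kolmogorov relation (vii) I would argue either by uniqueness or by direct expansion. The uniqueness route fixes $(s,y)$ and $r$ and views both sides as functions of $(t,x)$ for $t>r$: the right side $\int\mZ(s,y;r,z)\mZ(r,z;t,x)\,dz$ solves \eqref{generalized_integral_equation} started at time $r$ from the datum $\mZ(s,y;r,\cdot)$, as does the left side, and the uniqueness of mild solutions established via the method of \cite{BC} forces equality. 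The direct route splits $\Delta_k(s,t]$ by how many times fall in $(s,r]$; the two factors then have disjoint time supports, so the product formula for multiple integrals has no surviving contractions, $I_j(f)I_m(g)=I_{j+m}(f\otimes g)$, and since $(s,r]$ wholly precedes $(r,t]$ the merge of the two ordered simplices is canonical; integrating the shared space variable $z$ through the semigroup identity $\int\varrho(\cdot;r,z)\varrho(r,z;\cdot)\,dz=\varrho(\cdot;\cdot)$ reconstitutes $\varrho_{j+m}(\cdot|s,y;t,x)$. Property (iv) is quoted from \cite{Mueller}, the continuous version upgrading positivity at fixed tuples to positivity at all tuples simultaneously.

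Finally, for (v) I would combine the Brownian bridge decomposition with spatial stationarity of the noise. Writing $p_k=\varrho_k(\cdot|s,y;t,x)/\varrho(s,y;t,x)$ for the bridge density, the identity $p_k(\vt,\vx)=p_k^{0}(\vt,\vx-\ell(\vt))$, where $\ell(u)=y+\tfrac{u-s}{t-s}(x-y)$ is the linear interpolation and $p_k^0$ the $(s,0)\to(t,0)$ bridge density, lets me substitute $\xi_j=\vx_j-\ell(\vt_j)$ to get $I_k(p_k)=\int p_k^0(\vt,\xi)\,W^{\otimes k}(\vt,\xi+\ell(\vt))\,d\vt\,d\xi$. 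Because a deterministic time-dependent spatial shift preserves the law of white noise, jointly across all the integrals, one has $W(u,\cdot+\ell(u))\equiv W(u,\cdot)$, so $\sum_k\beta^k I_k(p_k)\equiv\sum_k\beta^k I_k(p_k^0)$, and the right-hand side manifestly depends on neither $x$ nor $y$. This yields the claimed invariance of the law of $\mZ(s,y;t,x;\beta)/\varrho(s,y;t,x)$.
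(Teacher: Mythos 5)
Your proposal is correct and follows essentially the same route as the paper's proof: mean-zero higher chaos terms for (i), translation and scaling of the white noise applied term-by-term to \eqref{four_param_field_def} for (ii)--(iii), the citation of \cite{Mueller} (with the delta-initial-data extension from \cite{BC}) for (iv), a shear of the noise along the linear interpolation between endpoints for (v) (the paper first reduces to $s=y=0$, $t=1$ by scaling and then shears via $W_\theta(u,x)=W(u,x+\theta u)$, which is your argument in mildly different clothing), measurability with respect to the noise on disjoint time strips for (vi), and linearity plus uniqueness of mild solutions, upgraded from a fixed (hence rational) family of tuples to all tuples by the continuous version, for (vii). The only departures are inessential: you prove joint continuity directly via hypercontractivity and Kolmogorov's criterion (correctly noting the degeneracy as $t-s\downarrow 0$) where the paper simply cites \cite{walsh:st_flour}, and you offer an alternative direct chaos-expansion verification of Chapman--Kolmogorov via the disjoint-support product formula, which is valid but not used in the paper.
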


\begin{proof}
The continuity of the field in $t$ and $x$ is proven in \cite{walsh:st_flour}, and the method is easily extended to show the continuity in $s$ and $y$. Property (i) follows from the $k \geq 1$ terms of \eqref{four_param_field_def} having mean zero. Property (ii) is a consequence of the translation invariance of the white noise, and property (iii) follows from the scaling relation for white noise and the Brownian scaling property $\varrho(r^2 s, ry; r^2 t, rx) = r^{-1} \varrho(s,y;t,x)$.

Property (iv) is proved in \cite{Mueller} for solutions to \eqref{SHE} with bounded, non-negative functions with compact support (that are not identically zero) as the initial conditions. The extension of his result to delta function initial conditions is proved in \cite{BC}.

For property (v) first note that by the scaling and translation properties it is enough to consider $s = y = 0$ and $t = 1$, which only leaves to show that the law does not depend on $x$. This is a result of the translation properties of Brownian bridges: a bridge from zero to a given endpoint is simply the translate of a bridge from zero to zero along a straight line. Thus
\begin{align*}
\E_{BB} \left[ \wp{\exp} \left \{ \beta\! \int_s^t \! W(u, X_u + u(x+\theta)) \, du \right \} \right] = \E_{BB} \left[ \wp{\exp} \left \{ \beta\! \int_s^t \! W_\theta(u, X_u + ux) \, du \right \} \right],
\end{align*}
where in this instance $X$ is a Brownian bridge starting and ending at zero, and $W_\theta(t,x) = W(t, x + \theta t)$. Since the map $(t,x) \mapsto (t, x + \theta t)$ preserves area the field $W_{\theta}$ is a white noise with correlations the same as $W$, which proves the invariance.

Property (vi) follows from the fact that $\mZ$ on each of the time intervals depends only on the white noise in that interval, and each of these white noises are independent of each other.

Property (vii) follows from linearity and uniqueness of solutions to the stochastic heat equation. Consider the field
\begin{align}\label{intermediate_field}
\int \mZ(s,y;r,z;\beta) \mZ(r,z;t,x;\beta) \, dz.
\end{align}
Linearity of the stochastic heat equation implies that this field satisfies \eqref{generalized_integral_equation} at any fixed, and hence at all rational, $(s,y;t,x)$ and $r$. Since \eqref{intermediate_field} is a continuous function in the four variables it therefore satisfies the integral equation \eqref{generalized_integral_equation} at all $(s,y;t,x)$ and $r$. But this means that it is a mild solution to \eqref{generalized_SHE}, with the same white noise that is used to construct the solution $\mZ(s,y;t,x;\beta)$, and therefore the uniqueness implies that \eqref{intermediate_field} and $\mZ(s,y;t,x;\beta)$ are equal almost surely.
\end{proof}

In polymer language the variables $\mZ(s,y;t,x;\beta)$ are \textit{point-to-point} partition functions. We will also need to make use of \textit{point-to-line} partition functions, which are integrals of the point-to-point versions. Analogously to the notation for discrete polymers, we define
\begin{align}\label{continuum_p2l_defn}
\mZ(s,y;t,*;\beta) = \int \mZ(s,y;t,x; \beta) \, dx.
\end{align}
In the convenient shorthand this can also be written as
\begin{align}\label{continuum_p2l_shorthand}
\mZ(s,y;t,*;\beta) = \Pl \left[ \left. \wp{\exp} \left \{ \beta \! \int_s^t \! W(u, B_u) \, du \right \} \right| B_s \in dy \right].
\end{align}
It is easy to see that $\E_Q \left[ \mZ(s,y;t,*;\beta) \right] = 1$, and that the positivity, translation invariance, scaling relation, and independence properties of the point-to-point partition functions all carry over to these point-to-line versions as well.

\section{Construction and Properties of the Continuum Directed Random Polymer}

The purpose of this section is to construct, for each $\beta \geq 0$, a probability measure $\P_{\beta}$ on the product space of continuum environments and continuum paths. We let $\F$ denote the $\sigma$-algebra on the product space. Throughout the environment will be white noise on $[0,1] \times \R$, hence we will focus mostly on constructing the path measure $\Pl_{\beta}^W$ for each realization of $W$.

\subsection{Finite Dimensional Distributions}

In this section we construct the probability measure $\Pl_{\beta}^W$ on $C([0,1])$ that defines the continuum random polymer. Let $X_t : C([0,1]) \to \R$, $0 \leq t \leq 1$, be the standard coordinate mappings given by
\begin{align*}
X_t(x) = x(t).
\end{align*}
We equip $C([0,1])$ with the standard topology of the cylinder sets, that is the smallest one that makes the mappings $X_t$ measurable. The natural filtration is $\mathcal{G}_t := \sigma \left( X_s : 0 \leq s \leq t \right)$. Recall that a probability measure on this space is uniquely determined by its finite dimensional distributions. We exploit this fact to define the continuum random polymer.

\begin{definition}\label{continuum_fdds}
Conditional on the white noise $W$, let $\Pl_{\beta}^W$ be the measure on $C([0,1])$ whose finite dimensional distributions are given by
\begin{align}\label{continuum_fdds_eqn}
\Pl_{\beta}^W \! \left( X_{\vt_1} \in d\vx_1, \ldots, X_{\vt_k} \in d\vx_k \right) = \frac{1}{\mZ(0,0;1,*;\beta)} \prod_{j=0}^{k} \mZ(\vt_j, \vx_j; \vt_{j+1}, \vt_{j+1};\beta) \, d\vx_1 \ldots d\vx_k,
\end{align}
with $(\vt_0, \vx_0) = (0,0)$ and $(\vt_{k+1}, \vx_{k+1}) = (1,*)$. The joint measure $\P_{\beta}$ is then defined by
\begin{align*}
\P_{\beta}(dW dX) = Q(dW) \Pl_{\beta}^W(dX).
\end{align*}
The $\beta = 0$ case we denote by $\P$.
\end{definition}

Observe that under $\P$ the path measure is standard Wiener measure and that the path and the environment are independent of each other. For all $\beta \geq 0$ the Chapman-Kolmogorov equations for the field $\mZ$ imply that these finite dimensional distributions are consistent, and that $\Pl_{\beta}^W$ is a probability measure. This definition also implies the following result, which we will use repeatedly.

\begin{lemma}\label{continuum_fdd_average}
For each $\beta \geq 0$ we have the formula
\begin{align*}
\E_{Q} \left[ \mZ(0,0;1,*;\beta) \Pl_{\beta}^W \! \left( X_{\vt_1} \in d\vx_1, \ldots, X_{\vt_k} \in d\vx_k \right) \right] = \Pl \! \left( X_{\vt_1} \in d\vx_1, \ldots, X_{\vt_k} \in d\vx_k \right).
\end{align*}
More generally, if $Y : C[0,1] \to \R$ is integrable with respect to $\Pl$ then
\begin{align*}
\E_{\P_{\beta}} \left[ \mZ(0,0;1,*; \beta) Y \right] = \E_{Q} \left[ \mZ(0,0;1,*;\beta) \E_{\Pl_{\beta}^W}[Y] \right] = \E_{\Pl}[Y] = \E_{\P}[Y].
\end{align*}
\end{lemma}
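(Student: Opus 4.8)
The plan is to prove the two displayed identities in turn, deriving the second from the first. I would begin with the finite-dimensional statement. Writing out the density in \eqref{continuum_fdds_eqn}, the factor $\mZ(0,0;1,*;\beta)$ multiplying the measure cancels against the normalizer in the definition of $\Pl_{\beta}^W$; this cancellation is legitimate because $\mZ(0,0;1,*;\beta)>0$ almost surely, by the positivity property of Theorem \ref{Z_properties_theorem}(iv) carried over to the point-to-line field. Thus the left-hand density becomes
\begin{align*}
\E_Q \left[ \prod_{j=0}^{k} \mZ(\vt_j, \vx_j; \vt_{j+1}, \vx_{j+1}; \beta) \right] d\vx_1 \cdots d\vx_k,
\end{align*}
with $(\vt_0,\vx_0)=(0,0)$ and $(\vt_{k+1},\vx_{k+1})=(1,*)$. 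The $k+1$ factors live on the disjoint time intervals $(0,\vt_1], (\vt_1,\vt_2], \ldots, (\vt_k,1]$, so by the independence property of Theorem \ref{Z_properties_theorem}(vi) (together with its point-to-line extension) the expectation of the product factors into the product of the expectations. Applying the mean formula $\E_Q[\mZ(s,y;t,x;\beta)]=\varrho(s,y;t,x)$ of Theorem \ref{Z_properties_theorem}(i) to the first $k$ factors, and $\E_Q[\mZ(\vt_k,\vx_k;1,*;\beta)]=1$ to the last, this product equals $\prod_{j=0}^{k-1}\varrho(\vt_j,\vx_j;\vt_{j+1},\vx_{j+1})\,d\vx_1\cdots d\vx_k$, which is exactly the Wiener finite-dimensional density $\Pl(X_{\vt_1}\in d\vx_1,\ldots,X_{\vt_k}\in d\vx_k)$. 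This establishes the first identity.

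For the second identity, the first equality is just Fubini combined with the fact that $\mZ(0,0;1,*;\beta)$ is a function of $W$ alone, so that integrating $X$ out against $\Pl_{\beta}^W$ and then $W$ against $Q$ reproduces $\E_{\P_{\beta}}$. For the central equality I would first treat cylinder functions $Y=f(X_{\vt_1},\ldots,X_{\vt_k})$ with $f\ge 0$ bounded: here $\E_{\Pl_{\beta}^W}[Y]=\int f\, \Pl_{\beta}^W(X_{\vt_1}\in d\vx_1,\ldots)$, and pulling $\E_Q$ inside the $d\vx$ integration (legitimate by Tonelli, as all integrands are nonnegative) and invoking the first identity gives $\E_Q[\mZ(0,0;1,*;\beta)\E_{\Pl_{\beta}^W}[Y]]=\int f\,\Pl(X_{\vt_1}\in d\vx_1,\ldots)=\E_{\Pl}[Y]$. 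I would then extend to arbitrary nonnegative measurable $Y$ by a monotone class argument (cylinder functions generate $\mathcal{G}_1$) together with monotone convergence, and finally to integrable $Y$ by writing $Y=Y^+-Y^-$. The last equality $\E_{\Pl}[Y]=\E_{\P}[Y]$ is immediate, since under $\P=\P_0$ the path is a standard Brownian motion independent of the environment.

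I expect the main obstacle to be the measure-theoretic bookkeeping of the extension step rather than any genuine analytic difficulty: one must ensure that the interchanges of $\E_Q$ with the $d\vx$ integration and with the approximating limits are valid. The clean route is to prove the identity first for nonnegative $Y$, where Tonelli applies with no hypotheses, and then use the resulting equality $\E_{\P_{\beta}}[\mZ(0,0;1,*;\beta)|Y|]=\E_{\Pl}[|Y|]<\infty$ to certify that $\mZ(0,0;1,*;\beta)Y$ is $\P_{\beta}$-integrable whenever $Y\in L^1(\Pl)$; ordinary Fubini for signed integrands then justifies the decomposition into positive and negative parts. A minor point to verify along the way is that $\E_{\Pl_{\beta}^W}[Y]$ is a measurable function of $W$, which follows from the joint continuity of the field $\mZ$ in its four arguments guaranteed by Theorem \ref{Z_properties_theorem}.
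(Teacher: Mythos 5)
Your proposal is correct and follows essentially the same route as the paper: multiply through by $\mZ(0,0;1,*;\beta)$, factor the expectation of the product using the independence of the field over disjoint time intervals, apply the mean formulas $\E_Q[\mZ]=\varrho$ (and mean $1$ for the point-to-line factor) to recover the Wiener finite-dimensional density, and then pass to general $Y$ by approximation. Your version merely fills in the measure-theoretic details (Tonelli for nonnegative $Y$, monotone class, positive/negative parts, measurability of $W \mapsto \E_{\Pl_{\beta}^W}[Y]$) that the paper compresses into ``approximation of $Y$ with simple functions.''
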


\begin{proof}
The first formula follows from the properties of $\mZ$ outlined in Theorem \ref{Z_properties_theorem}. After multiplying by $\mZ(0,0;1,*)$, the right hand side of \eqref{continuum_fdds_eqn} is the product of independent random variables, each of which has mean $\varrho(\vt_j, \vx_j; \vt_{j+1}, \vx_{j+1})$, and hence their product is the occupation density for Brownian motion. The second general formula now follows from the first one by approximation of $Y$ with simple functions.
\end{proof}

\subsection{H\"{o}lder Continuity}

\begin{theorem}
With $Q$ probability one, the probability measure $\Pl_{\beta}^W$ is supported on $C([0,1])$ and the paths are H\"{o}lder continuous with exponent $\delta$, for every $\delta < 1/2$.
\end{theorem}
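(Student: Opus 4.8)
The plan is to avoid estimating increment moments directly under the normalized measure $\Pl_\beta^W$, where the random partition function $\mZ(0,0;1,*;\beta)$ sitting in the denominator makes the moments $\E_{\Pl_\beta^W}[|X_t - X_s|^{2m}]$ intractable, and instead to transfer the regularity question to Brownian motion via an absolutely continuous change of measure. Concretely, I would introduce the tilted probability measure $\tilde{\P}_\beta$ on the product space by $d\tilde{\P}_\beta = \mZ(0,0;1,*;\beta)\, d\P_\beta$, which is a probability measure since $\E_Q[\mZ(0,0;1,*;\beta)] = 1$. The content of Lemma \ref{continuum_fdd_average} is precisely that after this tilt the finite dimensional distributions of the coordinate process $X$ agree with those of Wiener measure. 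As a path-space measure is determined by its fdds, the $X$-marginal of $\tilde{\P}_\beta$ is exactly standard Wiener measure.

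On the tilted side the regularity is then immediate: under $\tilde{\P}_\beta$ the process $X$ is a Brownian motion, so by the classical theory (Kolmogorov's criterion applied to $\E_\Pl[|X_t - X_s|^{2m}] = C_m |t-s|^m$, which for $m$ large yields any exponent below $(m-1)/(2m)$) the set $H_\delta$ of paths that are $\delta$-H\"older continuous has full $\tilde{\P}_\beta$-measure for every $\delta < 1/2$; intersecting over a sequence $\delta_n \uparrow 1/2$ preserves full measure. Since $H_\delta$ is determined by the path's values on a countable dense set of dyadic times, it lies in the product $\sigma$-algebra and is genuinely measurable, so no modification subtleties arise.

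The step I expect to carry the real content is transferring this full-measure statement back to $\P_\beta$, and this is exactly where the positivity of the stochastic heat equation (property (iv) of Theorem \ref{Z_properties_theorem}) is needed. Because $\mZ(0,0;1,*;\beta) = \int \mZ(0,0;1,x;\beta)\, dx$ is the integral of a $Q$-a.s. strictly positive field, the Radon--Nikodym density relating $\P_\beta$ and $\tilde{\P}_\beta$ is strictly positive and finite $\P_\beta$-almost surely; hence the two measures are mutually absolutely continuous and share the same null sets. Consequently $\P_\beta(H_\delta) = \tilde{\P}_\beta(H_\delta) = 1$. Writing $\P_\beta(dW\, dX) = Q(dW)\, \Pl_\beta^W(dX)$ and applying Fubini to the full-measure event $H_\delta$ then gives $\Pl_\beta^W(H_\delta) = 1$ for $Q$-almost every $W$, which is exactly the claim that $\Pl_\beta^W$ is supported on $\delta$-H\"older paths for every $\delta < 1/2$.

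In summary, the obstacle is conceptual rather than computational: the random denominator blocks any direct moment estimate under $\Pl_\beta^W$, and the remedy is to tilt by the partition function so that $X$ becomes an honest Brownian motion, and then to invoke the almost sure strict positivity of $\mZ$ to undo the tilt without sacrificing the H\"older regularity.
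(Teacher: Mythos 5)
Your proof is correct, and while it runs on exactly the same two ingredients as the paper's argument --- the tilting identity of Lemma \ref{continuum_fdd_average} and the $Q$-almost-sure strict positivity of $\mZ(0,0;1,*;\beta)$ (part (iv) of Theorem \ref{Z_properties_theorem}, which carries over to the point-to-line partition function) --- you deploy them at the level of events where the paper deploys them at the level of a quantitative functional, and this is a genuine difference. The paper fixes $\gamma>0$, forms $Y_\gamma = \int_0^1\!\int_0^1 |X_t-X_s|^{2\gamma}\,|t-s|^{-\gamma}\,dt\,ds$, uses the lemma to get $\E_{\P_\beta}\left[ \mZ(0,0;1,*;\beta)\, Y_\gamma \right] = \E_{\Pl}[Y_\gamma] < \infty$, concludes $Y_\gamma < \infty$ holds $\P_\beta$-a.s.\ by positivity, and only then obtains H\"older continuity path by path from the deterministic Garsia--Rodemich--Rumsey inequality. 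You instead note that the tilted measure $\mZ(0,0;1,*;\beta)\,d\P_\beta$ is a probability whose path marginal has the Wiener finite dimensional distributions, hence \emph{is} Wiener measure on $C([0,1])$, and that the tilt has density a.s.\ in $(0,\infty)$, so $\P_\beta$ and the tilted measure share null sets; the Wiener-full H\"older event $H_\delta$ (measurable, as you correctly note, because it is determined by the dyadic values of a continuous path) then transfers directly, and GRR is never needed. Two minor points: mutual absolute continuity preserves null sets rather than probabilities, which is all you need since $H_\delta^c$ is null under the tilt; and your identity $\E_{\P_\beta}[\mZ(0,0;1,*;\beta)\indicate{X\in A}] = \Pl(A)$ is just the second formula of Lemma \ref{continuum_fdd_average} applied to a bounded measurable $Y$, so nothing beyond the paper's toolkit is invoked. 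What your packaging buys is generality: it shows at one stroke that \emph{every} measurable Wiener-almost-sure path property holds $\Pl_\beta^W$-a.s.\ for $Q$-a.e.\ $W$ (the exceptional set of environments depending on the property), so the quadratic variation statement of Theorem \ref{quadratic_variation_theorem} along dyadic times, L\'evy's modulus of continuity, and the like all follow with no further computation; what the paper's GRR route buys is quantitative control, namely an explicit random H\"older constant expressed through the transferable variable $Y_\gamma$, rather than a bare almost-sure statement.
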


\begin{proof}
The proof is an application of the Garsia, Rodemich and Rumsey inequality, see \cite{varadhan:stoch_processes_notes} for an expository introduction.
For each $\gamma > 0$, define the random variable $Y_{\gamma}$ on $C([0,1])$ by
\begin{align*}
Y_{\gamma} = \int_0^1 \int_0^1 \frac{|X_t - X_s|^{2 \gamma}}{|t-s|^{\gamma}} \, dt \, ds.
\end{align*}
Garsia, Rodemich, and Rumsey says that, on a path-by-path basis,
\begin{align*}
\sup_{\substack{0 \leq s, t \leq 1 \\ |t-s| \leq \delta}} |X_t - X_s| \leq 8 \left( 4 Y_{\gamma} \right)^{\frac{1}{2 \gamma}} \delta^{\tfrac12 - \frac{1}{\gamma}}.
\end{align*}
Hence it is sufficient to show that, for all $\gamma > 0$, $Y_{\gamma} < \infty$ with $\P_{\beta}$ probability one. However, by Lemma \ref{continuum_fdd_average} we know that
\begin{align*}
\E_{\P_{\beta}} \left[ \mZ(0,0;1,*; \beta) Y_{\gamma} \right] = \E_Q \left[ \mZ(0,0;1,*; \beta) E_{\Pl^W_{\beta}}[Y_{\gamma}] \right] = \E_{\Pl}[Y_{\gamma}],
\end{align*}
and it is an easy computation for Brownian motion to show that the right hand side is finite. Since $\mZ(0,0;1,*)$ is strictly positive with $Q$ probability one (part iv of Theorem \ref{Z_properties_theorem}), it must be that $\E_{\Pl^W_{\beta}}[Y_{\gamma}]$ is finite with $Q$ probability one, or equivalently that $Y_{\gamma}$ is finite $\P_{\beta}$ almost surely.
\end{proof}

\subsection{Quadratic Variation}

We show that the quadratic variation of the continuum polymer behaves the same as Brownian motion if the spacing of the time mesh goes to zero fast enough.

\begin{theorem}\label{quadratic_variation_theorem}
Let $t_k^n = k 2^{-n}$. Then with $\P_{\beta}$ probability one, we have that for all $0 \leq t \leq 1$
\begin{align*}
\sum_{k=1}^{\lfloor 2^n t \rfloor} \left( X(t_k^n) - X(t_{k-1}^n) \right)^2 \to t
\end{align*}
as $n \to \infty$.
\end{theorem}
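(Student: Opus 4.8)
The plan is to deduce the statement from the analogous convergence for ordinary Brownian motion, transported via the change-of-measure identity in Lemma \ref{continuum_fdd_average}. That lemma gives, for every bounded path functional $Y$, the identity $\E_Q[\mZ(0,0;1,*;\beta)\,\E_{\Pl_\beta^W}[Y]] = \E_\Pl[Y]$. Taking $Y = \indicate{A^c}$ for a path event $A$ with $\Pl(A)=1$ yields $\E_Q[\mZ(0,0;1,*;\beta)\,\Pl_\beta^W(A^c)] = \Pl(A^c) = 0$; since $\mZ(0,0;1,*;\beta)>0$ with $Q$-probability one by part iv of Theorem \ref{Z_properties_theorem} and the integrand is nonnegative, we get $\Pl_\beta^W(A^c)=0$ for $Q$-almost every $W$, that is $\P_\beta(A)=1$. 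Hence any path property holding Wiener-almost surely also holds $\P_\beta$-almost surely, and the whole problem reduces to a statement about Brownian motion under $\Pl$.

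The step I would be most careful about is that this transfer must be applied to a \emph{single} full-measure path event. If one fixed $t$ and transferred the convergence one value of $t$ at a time, each transfer would discard a $Q$-null set of environments, and the uncountable union of these over $t\in[0,1]$ need not be null; one would then fail to obtain a single probability-one event on which the convergence holds for all $t$ simultaneously, which is exactly what the theorem demands. To avoid this I would package everything into the event
\[
A := \Big\{ X \in C([0,1]) : \sup_{0\le t\le 1}\big| Q_n(t) - t\big| \to 0 \ \text{ as } n\to\infty \Big\}, \qquad Q_n(t) := \sum_{k=1}^{\lfloor 2^n t\rfloor}\big(X(t_k^n)-X(t_{k-1}^n)\big)^2,
\]
which is a single $\mathcal{G}_1$-measurable event and plainly implies $Q_n(t)\to t$ for every $t$. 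It therefore suffices to prove $\Pl(A)=1$.

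For the Brownian estimate, write $P_m := \sum_{j=1}^{m}(X(t_j^n)-X(t_{j-1}^n))^2$, so that $Q_n$ is the step function $Q_n(t)=P_{\lfloor 2^n t\rfloor}$. Since $Q_n$ is constant on each dyadic interval while $t\mapsto t$ increases by $2^{-n}$ across it, comparing at the endpoints gives
\[
\sup_{0\le t\le 1}\big|Q_n(t)-t\big| \le \max_{0\le m\le 2^n}\big|P_m - m2^{-n}\big| + 2^{-n}.
\]
Under $\Pl$ the increments $X(t_j^n)-X(t_{j-1}^n)$ are i.i.d.\ $N(0,2^{-n})$, so $m\mapsto P_m - m2^{-n}$ is a mean-zero martingale with $\Var_\Pl(P_{2^n}-1)=2^n\cdot 2\cdot 2^{-2n}=2^{1-n}$. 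Doob's $L^2$ maximal inequality then bounds $\E_\Pl[\max_{m\le 2^n}(P_m-m2^{-n})^2]$ by $4\cdot 2^{1-n}=2^{3-n}$, and Chebyshev gives $\Pl(\max_m|P_m-m2^{-n}|>\varepsilon)\le 2^{3-n}\varepsilon^{-2}$, which is summable in $n$.

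Finally, Borel--Cantelli yields $\max_m|P_m-m2^{-n}|\to 0$ and hence $\sup_t|Q_n(t)-t|\to 0$ for $\Pl$-almost every path, so $\Pl(A)=1$; combined with the transfer principle of the first paragraph this gives $\P_\beta(A)=1$, which is the assertion. The only genuine work is the uniform bound via Doob's inequality; the change of measure and the elementary variance computation are routine, and the main conceptual obstacle is recognizing that one must first secure the all-$t$ convergence as one Wiener-probability-one event before transferring, rather than transferring pointwise in $t$.
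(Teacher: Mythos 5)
Your proof is correct, and it runs on the same engine as the paper's --- Lemma \ref{continuum_fdd_average} combined with the strict positivity of $\mZ(0,0;1,*;\beta)$ (part iv of Theorem \ref{Z_properties_theorem}) --- but the route is genuinely different in two respects. First, where you abstract the lemma into a transfer principle (every fixed Borel set $A \subseteq C([0,1])$ with $\Pl(A)=1$ satisfies $\P_{\beta}(A)=1$, because $\E_Q[\mZ(0,0;1,*;\beta)\,\Pl_{\beta}^W(A^c)] = \Pl(A^c) = 0$ and positivity strips the factor $\mZ$), the paper applies the lemma directly to the random variables $Y_n^2$, where $Y_n = \sum_k I_{k,n}$ and $I_{k,n} = (X(t_k^n)-X(t_{k-1}^n))^2 - 2^{-n}$: it computes $\E_{\P_{\beta}}[\mZ(0,0;1,*;\beta) Y_n^2] = 2^{-n+1}$ from the Wiener-side identity $\E_{\P}[I_{j,n} I_{k,n}] = \indicate{j=k} 2^{-2n+1}$, runs Borel--Cantelli on the polymer side to get $\mZ(0,0;1,*;\beta) Y_n^2 \to 0$ with $\P_{\beta}$ probability one, and only then invokes positivity. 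Second, where you secure the ``for all $t$'' conclusion by proving the uniform statement $\sup_t |Q_n(t)-t| \to 0$ $\Pl$-a.s.\ via Doob's $L^2$ maximal inequality and transferring that single event, the paper settles $t=1$, notes the identical argument works at each rational $t$, and uses monotonicity of the limit in $t$ (plus continuity of the identity function) to upgrade to all $t$ simultaneously --- exactly the uncountable-union-of-null-sets pitfall you identified, handled without any maximal inequality. Your version buys a slightly stronger conclusion (uniform convergence of the quadratic variation process) and a reusable principle, which amounts to saying that the size-biased annealed path law $\E_Q[\mZ(0,0;1,*;\beta)\,\Pl_{\beta}^W(\cdot)]$ coincides with Wiener measure; as a sanity check, note this does not clash with Theorem \ref{singularity_theorem}, since there the $\Pl$-null set charged by $\Pl_{\beta}^W$ depends on $W$, while your transfer applies only to deterministic path events. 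Two routine points you lean on implicitly: extending Lemma \ref{continuum_fdd_average} from cylinder functionals to indicators of arbitrary Borel sets (and the measurability of $W \mapsto \Pl_{\beta}^W(A)$) requires a monotone-class argument --- the lemma is stated for all $\Pl$-integrable $Y$, so you may cite it, but its ``approximation by simple functions'' should be read as running over the full Borel $\sigma$-algebra; and your moment computations ($\Var_{\Pl}(P_{2^n}-1) = 2^{1-n}$, Doob constant $4$, summable tail bound $2^{3-n}\varepsilon^{-2}$) are all correct.
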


\begin{proof}
It is sufficient to consider $t = 1$ since the same argument carries over to all rational $t$ in $[0,1]$, and therefore the limit of the left hand side (as a function of $t$) exists pointwise at all rational times and is equal to the identity function. Since the limit is necessarily non-decreasing in $t$ it must be everywhere equal to the identity.

For $n \geq 1$ and $1 \leq k \leq 2^{n}$, let $I_{k,n} = (X(t_{k}^n) - X(t_{k-1}^n))^2 - 2^{-n}$, and
\begin{align*}
Y_n = \sum_{k=1}^{2^n} I_{k,n}.
\end{align*}
We will show that $Y_n^2 \to 0$ with $\P_{\beta}$ probability one. Using Lemma \ref{continuum_fdd_average} we have
\begin{align*}
\E_{\P_{\beta}} \left[ \mZ(0,0;1,*;\beta) I_{j,n} I_{k,n} \right] = \E_{\P} \left[ I_{j,n} I_{k,n} \right],
\end{align*}
and it is an easy computation for Brownian motion that $\E_{\P} \left[ I_{j,n} I_{k,n} \right] = \indicate{j=k} 2^{-2n + 1}$. Therefore
\begin{align*}
\E_{\P_{\beta}} \left[ \mZ(0,0;1,*;\beta) Y_n^2 \right] = \sum_{j,k=1}^{2^n} \E_{\P_{\beta}} \left[ \mZ(0,0;1,*;\beta) I_{j,n} I_{k,n} \right] = 2^{-n+1},
\end{align*}
so by Borel-Cantelli we have $\mZ(0,0;1,*;\beta) Y_n^2 \to 0$ with $\P_{\beta}$ probability one. But since $\mZ(0,0;1,*;\beta)$ is strictly positive this forces that $Y_n^2 \to 0$ with $\P_{\beta}$ probability one.
\end{proof}

\subsection{Singularity with Respect to Wiener Measure}

The last two sections show that the continuum random polymer has the same H\"{o}lder continuity and quadratic variation properties as Brownian motion. Based on this, it is natural to believe that the polymer is just a Brownian motion with a drift, and in some sense this is true but the drift is very rough. In Section \ref{sec:SDE} we show formal calculations that give a formula for the drift, even though we do not know how to properly make sense of it.

In this section we show that for all $\beta > 0$ the measure $\P_{\beta}$ is singular with respect to $\P$. Since the white noise environment is the same under both measures, this is equivalent to saying that for almost all realizations of the white noise the measure $\Pl_{\beta}^W$ is singular with respect to standard Wiener measure $\Pl$ on $C([0,1])$. In particular this shows that whatever the drift is it \textit{cannot} be in the Cameron-Martin class \cite{Janson:GHS}.

The idea of the proof is that the distribution (under $\P$ and $\P_{\beta}$) of the vectors
\begin{align*}
\left( X(0), X(2^{-n}), X(2 \cdot 2^{-n}), \ldots, X(1) \right)
\end{align*}
are absolutely continuous with respect to each other, but as $n \to \infty$ the Radon-Nikodym derivatives go to zero for $\P$ almost all paths. That this implies the measures are singular is a standard fact: see, for instance, \cite[Section 4.3.3]{durrett:book} for full details.

\newcommand{\ms}{\hspace{.75pt}}

As before, we let $t_k^n = k 2^{-n}$, and define the filtration by
\begin{align*}
\F_n = \sigma \left( W, X(t_k^n) : k \in \{ 0, 1, \ldots, 2^n \} \right).
\end{align*}
Observe that $\F_n \uparrow \F$ in the sense that $\sigma( \cup_n \F_n ) = \F$. For $x,y \in \R$ and $X \in C([0,1])$ let $C_k^n$ denote the following space-time tuples:
\begin{align*}
C_{k}^n(x,y) &= (t_{k-1}^n, x; t_k^n, y), \\
C_{k}^n(x, X) = C_k^n(x, X(t_k^n)), C_k^n(X, y) &= C_k^n(X(t_{k-1}^n), y), C_k^n(X) = C_k^n(X(t_{k-1}^n), X(t_{k}^n)).
\end{align*}
By Definition \ref{continuum_fdds}, it is easy to see that for $X \in C[0,1]$ we have
\begin{align*}
\left. \frac{d \ms \P_{\beta}}{d \ms \P} \right|_{\F_n} \!\!\!\!\!\!\! (W, X) = \left. \frac{d \! \Pl_{\beta}^W}{d \! \Pl} \right|_{\F_n} \!\!\!\!\!\!\! (X) = \frac{1}{\mZ(0,0; 1, *; \beta)} \prod_{k=1}^{2^n} \frac{\mZ(C_k^n(X); \beta)}{\varrho(C_k^n(X))}
\end{align*}
Let $M_n^W(X; \beta)$ denote the product term. We show the following:

\begin{theorem}\label{singularity_theorem}
For fixed $\beta \geq 0$, the sequence $M_n^W(X; \beta)$ is a positive martingale with respect to $\F_n$ and $\P$, and hence almost surely has a limit. If $\beta > 0$, then $\P$ almost surely $M_n^W(X; \beta) \to 0$ as $n \to \infty$.
\end{theorem}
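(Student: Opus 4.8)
The plan is to verify the martingale property first and then establish the convergence to zero via a product-of-factors argument that exploits the independence across disjoint time intervals (property vi of Theorem \ref{Z_properties_theorem}). For the martingale claim, I would condition $M_{n+1}^W(X;\beta)$ on $\F_n$ under $\P$. Passing from level $n$ to level $n+1$ refines each dyadic interval $(t_{k-1}^n, t_k^n]$ into two halves, so each factor $\mZ(C_k^n(X);\beta)/\varrho(C_k^n(X))$ is replaced by a product of two factors over the sub-intervals, integrated against the conditional law of the intermediate point $X(t_{2k-1}^{n+1})$. Under $\P$ the path is Brownian, so this intermediate point has the Brownian bridge density $\varrho$ connecting the two endpoints; the Chapman--Kolmogorov equation for $\mZ$ (property vii) then collapses the integral of the product of $\mZ$'s back to the single coarse factor $\mZ(C_k^n(X);\beta)$. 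The ratio structure is designed precisely so the $\varrho$ denominators cancel the Brownian transition densities, leaving $\E_\P[M_{n+1}^W \mid \F_n] = M_n^W$. Positivity is immediate from property (iv). Being a nonnegative martingale, $M_n^W$ converges $\P$-a.s.\ to a finite limit $M_\infty^W$.

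The substance is showing $M_\infty^W = 0$ when $\beta > 0$. The key observation is that by the stationarity, scaling, and independence properties, the factors $R_k^n := \mZ(C_k^n(X);\beta)/\varrho(C_k^n(X))$ are, conditionally on the path, independent across $k$ (disjoint time intervals), and each has the law of a single normalized point-to-point partition function over an interval of length $2^{-n}$. By the scaling relation (iii), such a factor over a short interval is distributed as a normalized partition function at a \emph{small effective inverse temperature} $\beta 2^{-n/4}$, and property (i) gives $\E_Q[R_k^n \mid X] = 1$. The natural route is to show that $\E_\P[\sqrt{M_n^W}]$, or equivalently $\E_\P\big[\prod_k \sqrt{R_k^n}\big]$, tends to zero. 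Because each $\sqrt{R_k^n}$ has mean strictly less than $1$ by Jensen (the variance being positive whenever $\beta > 0$), and there are $2^n$ independent factors, I expect a bound of the form $\E_\P[\sqrt{M_n^W}] \le \prod_k \E[\sqrt{R_k^n}] = (1 - c_n)^{2^n}$ where $c_n$ measures the variance of $R_k^n$. Using the second-moment formula \eqref{Z_covariance_relation}--\eqref{rhok_norm} one computes that $c_n$ is of order $2^{-n/2}$ (the leading correction coming from the $k=1$ chaos term at effective temperature $\beta 2^{-n/4}$), so that $(1-c_n)^{2^n} \to 0$. By Fatou/Scheff\'e this forces the a.s.\ limit $M_\infty^W$ to vanish.

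Concretely, the steps are: (1) establish the martingale identity via Chapman--Kolmogorov as above; (2) record that the $R_k^n$ are conditionally independent with mean $1$; (3) compute the second moment of $R_k^n$ from \eqref{Z_covariance_relation} and \eqref{rhok_norm}, extracting its dependence on the interval length $2^{-n}$ and hence a lower bound on $\Var(R_k^n)$; (4) bound $\E[\sqrt{R_k^n}] \le 1 - c\,\Var(R_k^n)$ for small variance and multiply over the $2^n$ independent factors; and (5) conclude $\E_\P[\sqrt{M_n^W}] \to 0$, whence $M_\infty^W = 0$ a.s.

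The main obstacle I anticipate is controlling the square-root moments uniformly: the elementary inequality $\E[\sqrt{R}] \le 1 - c\Var(R)$ is only valid when the fluctuations of $R$ are sufficiently small, so I must verify that the effective temperature $\beta 2^{-n/4}$ is small enough for large $n$ that the relevant $R_k^n$ concentrate near their mean, controlling the contribution of the higher chaos terms and any heavy upper tail. The scaling relation makes this quantitative, but care is needed to ensure the error terms from the Taylor expansion of $\sqrt{\cdot}$ do not overwhelm the variance term across $2^n$ factors; this is where the precise $2^{-n/2}$ scaling of $c_n$ must be matched against the factor count $2^n$ so their product diverges and the bound $(1-c_n)^{2^n}$ genuinely decays.
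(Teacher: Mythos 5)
Your martingale half coincides with the paper's argument: condition on $\F_n$, integrate out the midpoint $X(t_{2k-1}^{n+1})$ against its Brownian bridge density so that the $\varrho$ denominators cancel, collapse the resulting integral of the two fine-scale $\mZ$ factors via the Chapman--Kolmogorov property (vii), and invoke positivity plus the martingale convergence theorem. For the vanishing of the limit you take a genuinely different route. The paper makes the same reduction as you do --- by stationarity, scaling, independence (vi) and property (v), $M_n^W \equiv \prod_{i=1}^{2^n}(1+A_{i,n})$ with the $A_{i,n}$ i.i.d., mean zero, at effective temperature $\beta 2^{-n/4}$ --- but then works additively with the logarithm: using $\log(1+x) \leq x - \frac{x^2}{2}\indicate{-1<x<0}$ and Chebyshev it shows $Q\bigl(\sum_i \log(1+A_{i,n}) > -C\bigr) \to 0$ for every $C$, i.e.\ $M_n^W \to 0$ in probability, which together with the a.s.\ martingale limit pins that limit at zero. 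You instead work multiplicatively with fractional moments, $\E_{\P}[\sqrt{M_n^W}] = \bigl(\E_Q[\sqrt{1+A_{1,n}}]\bigr)^{2^n} \leq (1-c_n)^{2^n}$ with $c_n \asymp 2^{-n/2}$, and finish with Fatou. This is the standard strong-disorder device from the polymer literature; it buys a quantitative rate $\E_{\P}[\sqrt{M_n^W}] \leq e^{-c2^{n/2}}$ and a one-line conclusion, whereas the paper's log-sum argument avoids fractional moments at the cost of the slightly fussier reconciliation of convergence in probability with the a.s.\ limit.

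One step of your outline needs repair, and it is exactly where the two proofs share their quantitative kernel. A lower bound on $\Var(A_{1,n})$ alone (your step 3) does \emph{not} yield $\E[\sqrt{1+A_{1,n}}] \leq 1 - c\,\Var(A_{1,n})$: the variance of a mean-one factor could in principle sit on rare large values where the concavity gain of the square root is negligible. What you need is a truncated second moment bound. For mean-one $R \geq 0$ one has the exact identity $1 - \E[\sqrt{R}] = \tfrac12 \E[(\sqrt{R}-1)^2]$, and since $(\sqrt{R}-1)^2 = (R-1)^2/(\sqrt{R}+1)^2 \geq \tfrac16 (R-1)^2$ on the event $\{|R-1| \leq \tfrac12\}$, it suffices to prove $\E_Q\bigl[A_{1,n}^2 \indicate{|A_{1,n}| \leq 1/2}\bigr] \geq c\,2^{-n/2}$. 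This is precisely the content of the paper's auxiliary lemma, proved from the chaos decomposition $A_{1,n} \equiv \sigma 2^{-n/4} Z + Y_n$ with $Z$ standard Gaussian and $\E[Y_n^2] = O(2^{-n})$: the first chaos term carries the variance and cannot hide outside a fixed window around zero, while the higher chaos remainder is negligible at scale $2^{-n/2}$. Note that the identity above also dissolves the upper-tail worry in your final paragraph --- heavy upper tails of $R$ only increase $\E[(\sqrt{R}-1)^2]$, so no uniform tail control is required, only this truncated lower bound. With that lemma imported (its proof adapts verbatim to your two-sided truncation), your proposal is complete and correct.
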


\begin{proof}
First recall that we are working entirely under $\P$, and hence the white noise $W$ and the path $X$ are independent, and $X$ is a standard Brownian motion. That $M_n^W$ is a martingale follows from its definition as the Radon-Nikodym derivative on the increasing $\sigma$-fields $\F_n$, but we will also check it explicitly; in this calculation $\beta$ is fixed and so we drop it for convenience. Observe that
\begin{align*}
M_{n+1}^W(X) = \prod_{k=1}^{2^n} \frac{\mZ(C_{2k-1}^{n+1}(X)) \mZ(C_{2k}^{n+1}(X))}{\varrho(C_{2k-1}^{n+1}(X)) \varrho(C_{2k}^{n+1}(X))}.
\end{align*}
From the Markov property of standard Brownian motion it follows that
\begin{align}\label{continuum_martingale}
\E_{\P} \left[ \left. M_{n+1}^W(X) \right| \F_n \right] = \prod_{k=1}^{2^n} \E_{\P} \left[ \left. \frac{\mZ(C_{2k-1}^{n+1}(X)) \mZ(C_{2k}^{n+1}(X))}{\varrho(C_{2k-1}^{n+1}(X)) \varrho(C_{2k}^{n+1}(X))} \right| W, X(t_{k-1}^n), X(t_k^n) \right].
\end{align}
Each of these expectations is over Brownian motions conditioned to pass through $X(t_{k-1}^n)$ and $X(t_k^n)$, that is Brownian bridges. The term to integrate out is $X(t_{2k-1}^{n+1})$, which appears in the second space-time tuple of $C_{2k-1}^{n+1}(X)$ and the first space-time tuple of $C_{2k}^{n+1}(X)$. The conditional density of this term is
\begin{align}\label{continuum_singular_cancellation}
\P \left( \left. X(t_{2k-1}^{n+1}) \in dx \right| W, X(t_{k-1}^n), X(t_{k}^n) \right) &=
\Pl \left( \left. X(t_{2k-1}^{n+1}) \in dx \right| X(t_{k-1}^n), X(t_{k}^n) \right) \notag \\ &= \frac{\varrho(C_{2k-1}^{n+1}(X, x)) \varrho(C_{2k}^{n+1}(x, X))}{\varrho(C_{k}^n(X))} \, dx.
\end{align}
The numerator terms of \eqref{continuum_singular_cancellation} cancel with the denominator terms in \eqref{continuum_martingale}, and hence
\begin{align*}
\E_{\P} & \left[ \left. \frac{\mZ(C_{2k-1}^{n+1}(X)) \mZ(C_{2k}^{n+1}(X))}{\varrho(C_{2k-1}^{n+1}(X)) \varrho(C_{2k}^{n+1}(X))} \right| W, X(t_{k-1}^n), X(t_k^n) \right] \\
&= \frac{1}{\varrho(C_k^n(X))} \int \mZ \left( C_{2k-1}^{n+1}(X, x) \right) \mZ \left( C_{2k}^{n+1}(x, X) \right) \, dx = \frac{\mZ(C_k^n(X))}{\varrho(C_k^n(X))}
\end{align*}
by the Chapman-Kolmogorov equations. But this means that \eqref{continuum_martingale} is exactly $M_n^W(X)$. Hence $M_n^W(X)$ is a positive martingale, and therefore converges $\P$ almost surely.

Now we show that for $\beta > 0$ the limit is always zero. We proceed by examining the distribution of $M_n^W(X)$ as $n \to \infty$. First recall that for $X \in C([0,1])$ we have
\begin{align*}
\E_{Q} \left[ \mZ(C_k^n(X)) \right] = \varrho(C_k^n(X)),
\end{align*}
and that for $k \neq j$ the random variables $\mZ(C_k^n(X))$ and $\mZ(C_j^n(X))$ are independent of each other. Thus $M_n^W(X)$ is the product of $2^n$ independent and identically distributed random variables with mean $1$.

Now we analyze the distribution of the individual random variables. By the translation invariance of Brownian motion and its scaling relations, it follows that
\begin{align*}
C_k^n(X) \equiv (0, X(0); 2^{-n}, X(2^{-n})) \equiv (0, 0; 2^{-n}, 2^{-n/2} X(1)).
\end{align*}
Using that $X$ and $W$ are independent under $\P$, and combining the above with the scaling relation for $\mZ$ gives
\begin{align*}
\mZ(C_k^n(X); \beta) \equiv 2^{n/2} \mZ(0, 0; 1, X(1); \beta 2^{-n/4}).
\end{align*}
Since $\varrho$ has the same scaling relation (it corresponds to the $\beta = 0$ case for $\mZ$), we have that
\begin{align*}
\frac{\mZ(C_k^n(X); \beta)}{\varrho(C_k^n(X))} \equiv \frac{\mZ(0, 0; 1, X(1); \beta 2^{-n/4})}{\varrho(0, 0; 1, X(1))}.
\end{align*}
Using again that $X$ and $W$ are independent, and using part (v) of Theorem \ref{Z_properties_theorem}, the distribution of the above variables does not actually depend on $X$. Thus for each $n$ we define independent random variables $A_{i,n}, 1 \leq i \leq 2^n$, each with the distribution
\begin{align*}
1 + A_{i,n} \equiv \frac{\mZ(0, 0; 1, 0; \beta 2^{-n/4})}{\varrho(0, 0; 1, 0)},
\end{align*}
so that
\begin{align*}
M_n^W(X; \beta) \equiv \prod_{i=1}^{2^n} 1 + A_{i,n}.
\end{align*}
As the variables on the right hand side only depend on white noise, the rest of the proof uses only the measure $Q$. We will now show that for every $C > 0$
\begin{align*}
Q \left( \sum_{i=1}^{2^n} \log(1 + A_i^n) > -C \right) \xrightarrow{n \to \infty} 0.
\end{align*}
Combining this with the fact that $M_n^W(X)$ converges almost surely completes the proof.

Observe that $A_{i,n} > -1$ by the fact that $\mZ$ and $\varrho$ are strictly positive, and using Theorem \ref{Z_properties_theorem}, we have that
\begin{align*}
\E_{Q}[A_{i,n}] = 0, \Var_Q(A_{i,n}) = O(\beta^2 2^{-n/2}).
\end{align*}
Let
\begin{align*}
f(x) = x - \frac{x^2}{2} \indicate{-1 < x < 0}
\end{align*}
and define the random variables $B_{i,n}$ by $B_{i,n} = f(A_{i,n})$. Since $f(x) \geq \log(1+x)$, it is sufficient to show that
\begin{align*}
Q \left( \sum_{i=1}^{2^n} B_{i,n} > -C \right) \xrightarrow{n \to \infty} 0.
\end{align*}
The standard Chebyshev bound gives us that
\begin{align*}
Q \left( \sum_{i=1}^{2^n} B_{i,n} > -C \right) \leq \frac{2^n \Var(B_{i,n})}{(C + 2^n \E[B_{i,n}])^2}.
\end{align*}
The proof proceeds by showing that the $B_{i,n}$ have a variance that is on the same order of the magnitude of the $A_{i,n}$, but the mean of the $B_{i,n}$ is negative and of the same order as the standard deviation. To upper bound the variance we use the simple estimate $f(x)^2 \leq 4 x^2$ to get that
\begin{align*}
\Var_{Q}(B_{i,n}) \leq \E_{Q}[B_{i,n}^2] \leq 4 \E_{Q}[ A_{i,n}^2 ] = O(\beta^2 2^{-n/2}).
\end{align*}
For the mean observe that $\E[A_{i,n}] = 0$ implies
\begin{align*}
\E_{Q}[B_{i,n}] = - \tfrac12 \E_{Q}[A_{i,n}^2 \indicate{-1 < A_{i,n} < 0}].
\end{align*}
The chaos expansion \eqref{SHE_chaos_expansion} shows that $A_{i,n} \equiv \sigma 2^{-n/4} Z + Y_n$, where $\sigma$ is some positive constant, $Z$ is a standard normal random variable, and $Y_n$ is a mean zero random variable with $\E[Y_n^2] = 2^{-n}$. In the following Lemma we prove that these three simple facts imply that there is a constant $c > 0$ such that
\begin{align*}
\E_{Q} \left[ A_{i,n}^2 \indicate{-1 < A_{i,n} < 0} \right] \geq c 2^{-n/2}
\end{align*}
for $n$ sufficiently large, and this completes the proof.
\end{proof}

\begin{lemma}
Let $Z$ be a standard normal random variable and $Y_{\epsilon}$ be a collection of random variables such that $\E [ Y_{\epsilon} ] = 0, \E \left[ Y_{\epsilon}^2 \right] = O(\epsilon^4)$. Then there exists a constant $c > 0$ such that as $\epsilon \downarrow 0$
\begin{align*}
\E[ (\epsilon Z + Y_{\epsilon})^2 \indicate{\epsilon Z + Y_{\epsilon} < 0} ] \geq c \epsilon^2.
\end{align*}
\end{lemma}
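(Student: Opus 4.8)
The plan is to show that the higher-order term $Y_\epsilon$ is too small to cancel the contribution of the Gaussian part $\epsilon Z$. Write $S_\epsilon = \epsilon Z + Y_\epsilon$. The hypothesis $\E[Y_\epsilon^2] = O(\epsilon^4)$ says that $Y_\epsilon$ has $L^2$-norm of order $\epsilon^2$, which is negligible next to the order-$\epsilon$ fluctuations of $\epsilon Z$ (note that the mean-zero assumption on $Y_\epsilon$ is not even needed for a lower bound). The strategy is therefore to isolate a fixed-probability slice of the event $\{Z<0\}$ on which $\epsilon Z$ is bounded away from $0$, and to check that on this slice $S_\epsilon$ is negative and of size comparable to $\epsilon$, so that $S_\epsilon^2 \indicate{S_\epsilon<0} \gtrsim \epsilon^2$ there.

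Concretely, I would introduce the event
\begin{align*}
E_\epsilon = \left\{ Z \in [-2,-1] \right\} \cap \left\{ |Y_\epsilon| \leq \epsilon/2 \right\}.
\end{align*}
On $E_\epsilon$ one has $\epsilon Z \leq -\epsilon$ together with $|Y_\epsilon| \leq \epsilon/2$, hence $-\tfrac{5}{2}\epsilon \leq S_\epsilon \leq -\tfrac12 \epsilon < 0$. Thus $\indicate{S_\epsilon<0}=1$ and $S_\epsilon^2 \geq \epsilon^2/4$ throughout $E_\epsilon$, which immediately yields
\begin{align*}
\E\left[ S_\epsilon^2 \indicate{S_\epsilon < 0} \right] \geq \frac{\epsilon^2}{4}\, \P(E_\epsilon).
\end{align*}

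It then remains only to bound $\P(E_\epsilon)$ below by a positive constant for all small $\epsilon$. I would use the elementary inequality $\P(A \cap B) \geq \P(A) - \P(B^c)$, valid for arbitrary events, with $A = \{Z \in [-2,-1]\}$ and $B = \{|Y_\epsilon| \leq \epsilon/2\}$. Setting $p = \P(Z \in [-2,-1]) > 0$, a fixed constant, Chebyshev's inequality gives
\begin{align*}
\P\left( |Y_\epsilon| > \epsilon/2 \right) \leq \frac{4\,\E[Y_\epsilon^2]}{\epsilon^2} = O(\epsilon^2) \xrightarrow{\epsilon \downarrow 0} 0,
\end{align*}
so there is $\epsilon_0>0$ with $\P(|Y_\epsilon|>\epsilon/2) \leq p/2$ for all $\epsilon < \epsilon_0$. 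Hence $\P(E_\epsilon) \geq p/2$ for such $\epsilon$, and combining with the previous display gives $\E[S_\epsilon^2\indicate{S_\epsilon<0}] \geq (p/8)\,\epsilon^2$; so $c = p/8$ works for $\epsilon$ sufficiently small.

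The one point needing genuine care, which I would flag as the main obstacle, is that $Z$ and $Y_\epsilon$ need not be independent: in the application $Y_n$ is the higher-chaos remainder of the very partition function whose first chaos is the Gaussian $\sigma 2^{-n/4} Z$, so the two events defining $E_\epsilon$ cannot simply be multiplied. The inclusion–exclusion bound $\P(A\cap B)\geq \P(A)-\P(B^c)$ sidesteps this cleanly, reducing the whole argument to the single Chebyshev estimate on $Y_\epsilon$; everything else is routine. (The constant $\sigma$ multiplying $Z$ in the application changes nothing: one simply rescales the window $[-2,-1]$, which only alters the final numerical value of $c$.)
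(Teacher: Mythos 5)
Your proof is correct, but it takes a genuinely different route from the paper's. The paper argues perturbatively: it expands
\begin{align*}
\E \left[ (\epsilon Z + Y_{\epsilon})^2 \indicate{\epsilon Z + Y_{\epsilon} < 0} \right] = \tfrac{\epsilon^2}{2} + \epsilon^2 \E \left[ Z^2 \left( \indicate{\epsilon Z + Y_{\epsilon} < 0} - \indicate{\epsilon Z < 0} \right) \right] + 2 \epsilon \E \left[ Z Y_{\epsilon} \indicate{\epsilon Z + Y_{\epsilon} < 0} \right] + \E \left[ Y_{\epsilon}^2 \indicate{\epsilon Z + Y_{\epsilon} < 0} \right],
\end{align*}
isolating the pure-Gaussian main term $\epsilon^2 \E[Z^2 \indicate{Z < 0}] = \epsilon^2/2$, and then shows the three error terms are $o(\epsilon^2)$: the last two by Cauchy--Schwarz, and the indicator-difference term by a two-scale split $|Y_{\epsilon}| \gtrless \epsilon^2 L$ handled with Chebyshev, tuning the fixed parameter $L$ at the end. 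You instead localize: on the fixed-probability slice $\{Z \in [-2,-1]\} \cap \{|Y_{\epsilon}| \leq \epsilon/2\}$ the (nonnegative) integrand is at least $\epsilon^2/4$, and the elementary bound $\P(A \cap B) \geq \P(A) - \P(B^c)$ plus a single Chebyshev estimate shows this slice has probability bounded below for small $\epsilon$. Your flag about dependence is well taken and well handled --- in the application $Y_n$ is the higher-chaos remainder of the same partition function whose first chaos is the Gaussian, so independence is indeed unavailable, and the inclusion--exclusion bound sidesteps it (the paper's Cauchy--Schwarz estimates do the same implicitly). Your argument is shorter and strictly more robust: it never uses $\E[Y_{\epsilon}] = 0$ and only needs $\E[Y_{\epsilon}^2] = o(\epsilon^2)$ (indeed $Y_{\epsilon}/\epsilon \to 0$ in probability would suffice), a weaker hypothesis than the stated $O(\epsilon^4)$. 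What the paper's expansion buys in exchange is the asymptotically sharp constant --- it shows $c$ can be taken arbitrarily close to $1/2$ --- whereas your localization yields only the non-sharp $c = \P(Z \in [-2,-1])/8$; since the proof of Theorem \ref{singularity_theorem} needs only \emph{some} $c > 0$, either version suffices for the application.
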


\begin{proof}
First observe that
\begin{align*}
\E & \left[ (\epsilon Z + Y_{\epsilon})^2 \indicate{\epsilon Z + Y_{\epsilon} < 0} \right] = \epsilon^2/2 + \epsilon^2 \E \left[ Z^2 (\indicate{\epsilon Z + Y_{\epsilon} < 0} - \indicate{\epsilon Z < 0}) \right] \\
&\quad \quad \quad \quad \quad + 2 \epsilon \E \left[ Z Y_{\epsilon} \indicate{\epsilon Z + Y_{\epsilon} < 0} \right] + \E \left[ Y_{\epsilon}^2 \indicate{\epsilon Z + Y_{\epsilon} < 0} \right].
\end{align*}
The last two terms are easy to bound since
\begin{align*}
\E \left[ Y_{\epsilon}^2 \indicate{\epsilon Z + Y_{\epsilon} < 0} \right] \leq \E \left[ Y_{\epsilon}^2 \right] = O(\epsilon^4),
\end{align*}
and
\begin{align*}
\epsilon \, \big | \! \E \left[ Z Y_{\epsilon} \indicate{\epsilon Z + Y_{\epsilon} < 0} \right] \! \big | \leq \epsilon \sqrt{\E[Z^2] \E[Y_{\epsilon}^2] } = O(\epsilon^3).
\end{align*}
The remaining term we break into the two cases $|Y_{\epsilon}| > \epsilon^2 L$ and $|Y_{\epsilon}|^2 \leq \epsilon^2 L$ for a number $L > 0$ to be determined later. In the latter case we have
\begin{align*}
\big | \! \E \left[ Z^2 (\indicate{\epsilon Z + Y_{\epsilon} < 0} - \indicate{\epsilon Z < 0}) \indicate{|Y_{\epsilon}| \leq \epsilon^2 L} \right] \! \big | \leq \E \left[ Z^2 \indicate{|\epsilon Z| \leq \epsilon^2 L}\right] = O(\epsilon L).
\end{align*}
In the former case we have
\begin{align*}
\big | \! \E \left[ Z^2 (\indicate{\epsilon Z + Y_{\epsilon} < 0} - \indicate{\epsilon Z < 0}) \indicate{|Y_{\epsilon}| > \epsilon^2 L}\right] \! \big | \leq \sqrt{\E[Z^4] \mathrm{P}(|X_{\epsilon}| > \epsilon^2 L)} \leq \frac{C}{L},
\end{align*}
for some fixed constant $C > 0$. Combining all these estimates gives
\begin{align*}
\E & \left[ (\epsilon Z + Y_{\epsilon})^2 \indicate{\epsilon Z + Y_{\epsilon} < 0} \right] \geq \epsilon^2/2 + \epsilon^2(C/L + O(\epsilon L)) + O(\epsilon^3).
\end{align*}
Choosing $L$ such that $C/L < 1/2$ completes the proof.
\end{proof}

\section{Generalizations}

\subsection{Point-to-Point Polymers \label{sec:p2p}}

The continuum random polymers described thus far have been point-to-line versions: those with fixed starting points but free endpoints. Point-to-point versions with fixed ending points can be constructed just as easily. Taking $x \in \R$, the finite dimensional distributions of the continuum random polymer ending at $x$ are defined by
\begin{align*}
\Pl_{\beta, x}^W \left( X_{\vt_1} \in d \vx_1, \ldots, X_{\vt_k} \in d \vx_k \right) = \frac{1}{\mZ(0,0;1,x; \beta)} \prod_{j=0}^{k} \mZ(\vt_j, \vx_j; \vt_{j+1}, \vt_{j+1};\beta) \, d\vx_1 \ldots d\vx_k,
\end{align*}
with $(\vt_0, \vx_0) = (0,0)$ and $(\vt_{k+1}, \vx_{k+1}) = (1,x)$. It is easily checked that these have these bridges have the same H\"{o}lder continuity and quadratic variation properties as standard Brownian bridges.

\subsection{Polymers of Different Lengths \label{sec:lengths}}

In this paper we have chosen to concentrate only on directed polymers of length $1$, but the definitions and results are easily generalized to polymers of an arbitrary length. In this section we briefly describe how to do this, and also show how polymers of infinite length can be constructed by taking an appropriate limit.

The extension is simple. We first let the space-time white noise $W$ live on $[0, \infty) \times \R$, and observe that this $W$ can be used to construct the family $\mZ(s, y; t, x; \beta)$ of space-time partition functions for any $0 < s < t < \infty$. Then for fixed $T > 0$ we define the path measure $\Pl_{\beta, T}^W$ on $C([0,T])$ by the same formula as in Definition \ref{continuum_fdds} but with $T$ everywhere replacing $1$. The joint measure $\P_{\beta, T}$ is then defined by specifying the white noise restricted to $[0,T] \times \R$ as the environment marginal and the path measure $\Pl_{\beta, T}^W$ as the conditional.

It is important to observe that, as in the case of discrete polymers, the path measures $\Pl_{\beta, T}^W$ are \textit{not} consistent in $T$. Projecting a path of length $T$ onto a path of a smaller length $T'$ does \textit{not} produce a path distributed according to $\Pl_{\beta, T'}^W$. A mathematical explanation is given by \eqref{continuum_fdds_eqn}, which shows that the transition probabilities from any space-time point are not constant in $T$. The intuitive explanation, however, is that the polymer path always surveys the environment laid out before it and then constructs the transition probabilities accordingly. If the environment is perturbed, in this case by ignoring the part from $T'$ to $T$, then the transition probabilities at \textit{all} space-time points before time $T'$ also change.

We remark, however, that there is a simple scaling relation between the different measures. Suppose $(W,X)$ is distributed according to $\P_{\beta, T}$ for some $T > 0$. Then the rescaled pair $(W_*, X_*)$ defined by
\begin{align*}
W_*(t,x) = T^{-3/4} W(tT, x \sqrt{T}), \quad X_*(t) = T^{-1/2} X(tT),
\end{align*}
has the distribution of $\P_{\beta T^{1/4}}$. This can be seen in many ways, but it is likely easiest to understand by using the Gibbsian formalism \eqref{formal_gibbs_expression}, the white noise scaling relation \eqref{white_noise_scaling}, and the scaling relation of Brownian motion. An immediate consequence of the scaling relation for $(W,X)$ is that
\begin{align*}
\lim_{T \to \infty} \P_{\beta T^{-1/4}, T}
\end{align*}
exists for every $\beta > 0$, meaning that there are continuum polymers of infinite length. The scaling of $\beta$ by $T^{-1/4}$ as $T \to \infty$ is the exact analogue of the scaling used in \cite{AKQ:IDR} on the \textbf{intermediate disorder regime}, where we prove that the transition probabilities of the discrete directed polymer converge those of the continuum one.

\subsection{A Stochastic Differential Equation for the Continuum Polymer \label{sec:SDE}}

Given the environment recall that the continuum polymer evolves in a Markovian way. Since it has continuous paths it is reasonable to expect that there is a stochastic differential equation governing its dynamics. Formally we know what this SDE is, but we do not know precisely how to make sense of it. The diffusion term of the SDE is a standard Brownian motion, but the drift term is highly singular and not an object that we know how to deal with. In this section we give a purely formal description of the SDE.

The SDE we describe is for the point-to-line polymer; the SDE for the point-to-point polymer can be constructed similarly. Let
\begin{align*}
h(s,y) = \log \mZ(s,y; 1, *; \beta).
\end{align*}
Formally speaking, the $\mZ$ field, as a function of $s$ and $y$, satisfies the stochastic PDE
\begin{align*}
\partial_s \mZ = -\tfrac{1}{2} \partial_{yy} \mZ - \beta W, \quad Z(1, y; 1, *) = 1.
\end{align*}
Observe that in $s$ the SPDE is backwards in time. This can be seen by noticing that the role of $s$ and $t$ in \eqref{exponential_shorthand} is reversed, and so comparing with the forward stochastic heat equation \eqref{SHE} we see that the time direction for the SPDE should also be reversed. It is also in perfect analogy with the discrete evolution equation \eqref{discrete_backward_difference_equation} for the $Z(i,x;j,*;\beta)$ field, which is solved backwards in time. Then straightforward computations show that $h$ obeys the KPZ equation \cite{KPZ}
\begin{align}\label{KPZ_eqn}
\partial_s h = -\tfrac{1}{2} \partial_{yy} h - \tfrac{1}{2} \left( \partial_y h \right)^2 - \beta W, \quad h(1,y) = 0.
\end{align}
The initial time condition of $h(1,y) = 0$ seems odd, but notice that the $-\beta W$ term on the right hand side of the KPZ equation immediately perturbs $h$ away from zero. Now by an application of Ito's formula we have that
\begin{align*}
dh(s, X_s) = \partial_s h(s,X_s) \, ds + \partial_y h(s,X_s) \, dX_s + \tfrac{1}{2} \partial_{yy} h(s, X_s) \, ds,
\end{align*}
with the last term being the standard It\^{o} correction. The $ds$ coefficient appears there because the continuum polymer has linearly growing quadratic variation. The KPZ equation \eqref{KPZ_eqn} implies that this can be rewritten as
\begin{align*}
dh(s, X_s) = - \tfrac{1}{2} (\partial_y h(s, X_s))^2 \, ds - \beta W(s, X_s) \, ds + \partial_y h(s, X_s) \, dX_s,
\end{align*}
and after rearranging and integrating each side from $s=0$ to $s=1$ we have
\begin{align*}
\beta \int_0^1 W(s, X_s) \, ds = \int_0^1 \partial_y h(s, X_s) \, dX_s - \frac{1}{2} \int_0^1 \left( \partial_y h(s,X_s) \right)^2 \, ds - h(1, X_1) + h(0,0).
\end{align*}
But $h(1, X_1) = 0$ from the boundary condition and $h(0,0) = \log \mZ(0,0;1,*;\beta)$ by definition of $h$. Exponentiating both sides gives
\begin{align*}
\exp \left \{ \beta \int_0^1 W(s, X_s) \, ds \right \} = \mZ(0,0;1,*;\beta) \exp \left \{ \int_0^1 \partial_y h(s, X_s) \, dX_s - \frac{1}{2} \int_0^1 \left( \partial_y h(s,X_s) \right)^2 \, ds \right \}.
\end{align*}
Using this equation and the Gibbsian formalism \eqref{formal_gibbs_expression}, we can rewrite the path measure as
\begin{align*}
d \Pl_{\beta}^W(X) &= \frac{1}{\mZ(0,0;1,*;\beta)}  \exp  \left \{ \beta \int_0^1 W(s, X_s) \, ds \right \} \, d \! \Pl(X) \\
&= \, \exp \left \{ \int_0^1 \partial_y h(s, X_s) \, dX_s - \frac{1}{2} \int_0^1 \left( \partial_y h(s,X_s) \right)^2 \, ds \right \} \, d \! \Pl (X).
\end{align*}
By Girsanov's theorem, the last expression says that $\Pl_{\beta}^W$ is the probability measure induced on paths by the diffusion
\begin{align*}
d X_s = \partial_y h(s, X_s) \, ds + dB_s,
\end{align*}
where $B$ is a standard Brownian motion.

The main difficulty in interpreting this SDE is that we do not know how to make sense of the spatial derivative of the field $h$. It is well known that $y \mapsto \mZ(s,y;1,*)$ is H\"{o}lder continuous of order $1/4-\epsilon$ for every $\epsilon > 0$, but it is certainly not differentiable. Even if the derivative $\partial_y h$ can be made sense of in an appropriate way it is certainly not in the Cameron-Martin class \cite{Janson:GHS}. This accounts for the singularity of the continuum polymer with respect to Wiener measure.

\bibliographystyle{alpha}
\bibliography{polymers}

\end{document}